
\documentclass[12pt]{amsart}
\usepackage{amsfonts, amssymb, amsmath, amsthm, euscript}
\textwidth=36pc
\oddsidemargin=30pt
\evensidemargin=30pt

\input xy \xyoption{matrix} \xyoption{arrow} 
\def\edge{\ar@{-}}
\def\dropup#1{ \save+<0ex,3ex> \drop{#1} \restore }

\theoremstyle{plain}
\newtheorem{thm}{Theorem}[section]
\newtheorem{lem}[thm]{Lemma}

\newtheorem{cor}[thm]{Corollary}

\theoremstyle{definition}
\newtheorem{setup}[thm]{Setup}
\newtheorem{exam}[thm]{Example}

\newtheorem{note}[thm]{}

\theoremstyle{remark}
\newtheorem*{nonumrem}{Remark}

\def\gq{\gamma_q}
\def\rij{r_{ij}}

\def\nxn{n{\times}n}

\def\Z{\mathbb{Z}}
\def\Zn{\mathbb{Z}^n}
\def\ktimes{k^{\times}}
\def\kplus{k^+}
\def\Ktimes{K^{\times}}
\def\tR{\widetilde{R}}
\def\tL{\widetilde{\Lambda}}
\def\tc{\tilde{c}}
\def\prim{\operatorname{Prim}}
\def\Gplus{\Gamma^+}
\def\kGplus{k\Gamma^+}
\def\kcGplus{k^c\Gamma^+}

\def\kGw{k\Gamma_w}
\def\Gw{\Gamma_w}
\def\KtcGplus{K^{\tilde{c}}\Gamma^+}
\def\im{\operatorname{im}}
\def\Q{\mathbb{Q}}
\def\Z{\mathbb{Z}}
\def\C{\mathbb{C}}
\def\Hom{\operatorname{Hom}}
\def\Spw{S^{\bot}_w} 
\def\spec{\operatorname{Spec}}
\def\pspec{\operatorname{PSpec}}

\def\calC{{\mathcal C}}
\def\calO{{\mathcal O}}
\def\calP{{\mathcal P}}
\def\bfr{{\mathbf r}}
\def\pprim{\operatorname{PPrim}}
\def\sympcore{\operatorname{SympCr}}
\def\chr{\operatorname{char}}
\def\abar{\overline{a}}
\def\bbar{\overline{b}}
\def\gfrak{{\mathfrak g}}
\def\mfrak{{\mathfrak m}}
\def\Max{\operatorname{Max}}
\def\bfq{{\mathbf q}}
\def\qij{q_{ij}}
\def\qji{q_{ji}}
\def\tqij{\tilde{q}_{ij}}
\def\R{\mathbb{R}}
\def\tq{\tilde{q}}
\def\rad{\operatorname{rad}}
\def\Ahat{\widehat{A}}
\def\Rhat{\widehat{R}}
\def\Phihat{\widehat{\Phi}}
\def\chat{\hat{c}}
\def\uhat{\hat{u}}
\def\qhatij{\hat{q}_{ij}}
\def\bfqhat{\widehat{\bfq}}
\def\Max{\operatorname{Max}}
\def\OqG{\calO_q(G)}
\def\gr{\operatorname{gr}}
\def\rhat{\hat{r}}
\def\shat{\hat{s}}
\def\calA{{\mathcal A}}
\def\bfc{{\boldsymbol c}}

\begin{document}

\title{Semiclassical Limits of Quantum Affine Spaces}

\author{K. R. Goodearl}

\author{E. S. Letzter}

\address{Department of Mathematics\\
University of California\\
Santa Barbara, CA 93106}

     \email{goodearl@math.ucsb.edu}

\address{Department of Mathematics\\
        Temple University\\
        Philadelphia, PA 19122}
      
      \email{letzter@temple.edu }

      \thanks{Research of the first author supported by a grant from
        the National Science Foundation, and research of the second
        author by a grant from the National Security Agency.  This
        research was also partially supported by Leverhulme Research
        Interchange Grant F/00158/X (UK)}

\keywords{}

\subjclass{16W35; 16D60, 17B63, 20G42}

\begin{abstract} 
  Semiclassical limits of generic multiparameter quantized coordinate
  rings $A= \calO_{\bfq}(k^n)$ of affine spaces are constructed and
  related to $A$, for $k$ an algebraically closed field of
  characteristic zero and $\bfq$ a multiplicatively antisymmetric
  matrix whose entries generate a torsionfree subgroup of $\ktimes$. A
  semiclassical limit of $A$ is a Poisson algebra structure on the
  corresponding classical coordinate ring $R= \calO(k^n)$, and results
  of Oh, Park, Shin and the authors are used to construct
  homeomorphisms from the Poisson prime and Poisson primitive spectra
  of $R$ onto the prime and primitive spectra of $A$. The Poisson
  primitive spectrum of $R$ is then identified with the space of
  symplectic cores in $k^n$ in the sense of Brown and Gordon, and an
  example is presented (over $\C$) for which the Poisson primitive
  spectrum of $R$ is not homeomorphic to the space of symplectic
  leaves in $k^n$. Finally, these results are extended from quantum
  affine spaces to quantum affine toric varieties. \end{abstract}

\maketitle


\section{Introduction}

This paper is a study of ideal theory in quantum affine
$n$-space and, more generally, in quantum toric varieties. The focus
is on the relationship between the prime and primitive spectra of
these noncommutative algebras and the Poisson spectra of corresponding
commutative semiclassical limits.

\begin{note} {\bf History and context.} A basic principle of the
  \emph{orbit method} is that given a noncommutative algebra $A$, one
  should associate to $A$ an algebraic variety $V$ with a Poisson
  structure and should relate the primitive ideals of $A$ to the
  symplectic leaves in $V$.  This idea first arose in Lie theory, with
  the enveloping algebras $A= U(\gfrak)$ of finite dimensional complex
  Lie algebras $\gfrak$ providing fundamental examples. The symmetric
  algebra $S(\gfrak)$ has a Poisson bracket induced from the Lie
  bracket on $\gfrak$, and the identification of $S(\gfrak)$ with the
  coordinate ring $\calO(\gfrak^*)$ turns the affine space $\gfrak^*$
  into a Poisson variety, equipped with the \emph{KKS Poisson
    structure}. In this setting, the Poisson algebra $S(\gfrak)=
  \calO(\gfrak^*)$ is the \emph{semiclassical limit} of $U(\gfrak)$,
  because its Poisson bracket can be obtained from the identification
  of this algebra with the associated graded algebra of $U(\gfrak)$
  (with respect to the standard filtration). (See \ref{sclim} below
  for the semiclassical limit process.) A famous theorem of Kirillov,
  Kostant, and Souriau shows that the symplectic leaves in $\gfrak^*$
  coincide with the coadjoint orbits of the associated simply
  connected Lie group. If $\gfrak$ is solvable and algebraic, the
  Dixmier map gives a homeomorphism from the space of symplectic
  leaves of $\gfrak^*$ (equipped with the quotient Zariski topology)
  onto the primitive ideal space $\prim U(\gfrak)$.

  Analogous patterns are posited for quantum groups (for which a
  variant of the semiclassical limit is appropriate), particularly for
  quantized coordinate rings of algebraic varieties (e.g.,
  \cite[Introduction]{HodLev1}). One quickly sees, via examples, that
  the best results are to be expected in \emph{generic} cases (meaning
  that appropriate parameters are not roots of unity). Here a
  fundamental test case is $\OqG$, the standard single parameter
  quantized coordinate ring of a complex semisimple algebraic group
  $G$. Hodges and Levasseur \cite{HodLev1},\cite{HodLev2}, working
  with $G= SL_n(\C)$, and Joseph \cite{Jos1},\cite{Jos2}, extending
  their results to general $G$, constructed bijections from the space
  of symplectic leaves in $G$ (relative to the semiclassical limit
  Poisson structure) onto the primitive ideal space of $\OqG$. It is
  an open problem (solved only in the easy case $G= SL_2(\C)$) whether
  homeomorphisms can be constructed. The problem may be alternatively
  expressed in terms of topological quotients. The results of
  Hodges-Levasseur and Joseph give surjections $G\rightarrow \prim
  \OqG$ whose fibers are the symplectic leaves in $G$, and the
  question becomes: Does there exist such a surjection for which
  $\prim \OqG$ has the quotient topology?

  The present authors raised the corresponding problem for other
  quantized coordinate rings in the following form \cite{Goo1},
  \cite{GooLet1}: If $A$ is a generic quantized coordinate ring of an
  algebraic variety $V$, is $\prim A$ a topological quotient of $V$,
  and is the prime spectrum $\spec A$ a topological quotient of $\spec
  \calO(V)$? They proved that these indeed hold for quantum tori and
  quantum affine spaces \cite{GooLet1}. (In fact, these results hold
  in non-generic cases as well, modulo a small technical assumption.)
  Later, Oh, Park, and Shin \cite{OhParShi} showed that the maps
  constructed in \cite{GooLet1} induce homeomorphisms from the spaces
  of Poisson primitive and Poisson prime ideals in coordinate rings of
  tori and affine spaces onto the primitive and prime spectra of
  corresponding generic quantized coordinate rings. However, the Poisson structures in these results were not
exhibited as semiclassical limits.
\end{note}

\begin{note} {\bf Results of this paper.}
  Our purposes here are threefold. First, we construct a semiclassical
  limit $R = \calO(k^n)$ of the quantum coordinate ring $A =
  \calO_q(k^n)$, when the quantizing parameters generate a torsionfree
  group, such that the above-cited results of Oh, Park and Shin can be
  applied to give homeomorphisms, respectively, from the Poisson prime
  and Poisson primitive spectra of $R$ onto the prime and primitive
  spectra of $A$.  Furthermore, since our homeomorphisms occur in
  generic settings, the explicit descriptions of the maps involved can
  be somewhat simplified, and we take the opportunity to do
  so. Second, we show that the Poisson primitive ideals occurring in
  the coordinate rings $\calO(k^n)$ here correspond to the Poisson
  \emph{cores} in the affine space $k^n$, in the sense of Brown and
  Gordon \cite{BroGor}, and do not always correspond to symplectic
  \emph{leaves} in general (over $\C$). Finally, we extend the
  preceding results to quantum affine toric varieties, as we did for
  topological quotients in \cite{Goo1} and \cite{GooLet1}.
\end{note}

\begin{note} The paper is organized as follows. In Section 2, the
  semiclassical limit is constructed. In Section 3, the homeomorphisms
  are presented. Also in Section 3 is an example showing that
  primitive ideals and symplectic leaves do not necessarily correspond
  bijectively. In Section 4, the special case in which the $\qij$ are
  all powers of single parameter is considered. Section 5, finally, contains the
  generalizations to quantum affine toric varieties and related
  algebras.
\end{note}

\begin{note} \label{sclim}
Recall that a \emph{Poisson algebra} over a field $k$ is a commutative $k$-algebra $R$ equipped with a \emph{Poisson bracket}, that is, a $k$-bilinear map $\{-,-\}: R\times R\rightarrow R$ such that $\bigl( R, \{-,-\}\bigr)$ is a Lie algebra and such that $\{-,-\}$ is a derivation in each variable. These can arise as semiclassical limits in the following two ways.

First, suppose that $\calA$ is a nonnegatively filtered $k$-algebra whose associated graded ring, $R= \gr \calA$, is commutative. Given homogeneous elements $r\in \gr_i \calA$ and $s\in \gr_j \calA$, choose representatives $\rhat \in \calA_i$ and $\shat \in \calA_j$. The commutativity of $R$ implies that $[\rhat,\shat] \in \calA_{i+j-1}$, and we set $\{r,s\}$ equal to the coset of $[\rhat,\shat]$ in $\gr_{i+j-1} \calA$. This provides a well-defined Poisson bracket on $R$, and the resulting Poisson algebra is called the \emph{semiclassical limit} of $\calA$.

For the second construction, suppose that $\calA$ is a $k$-algebra and that $h\in \calA$ is a central non-zero-divisor such that $R= \calA/h\calA$ is commutative. Given any $r,s\in R$, choose representatives $\rhat,\shat \in \calA$. Then $[\rhat,\shat]$ is uniquely divisible by $h$ in $\calA$, and we set $\{r,s\}$ equal to the coset $\frac1h [\rhat,\shat] +h\calA$ in $R$. We again obtain a well-defined Poisson bracket on $R$, and this Poisson algebra is viewed as  the \emph{semiclassical limit} of $\calA$. The algebra $\calA$ may be thought of as a family of deformations of $R$, namely the algebras $\calA_q= \calA/(h-q)\calA$ for $q\in k$. By abuse of terminology, $R$ is also referred to as a semiclassical limit of one of the algebras $\calA_q$, when $q$ is suitably generic.
\end{note}

\section{Construction of the Semiclassical Limit}

\begin{setup} \label{setup} Assume throughout the paper that:

(i) $k$ is an algebraically closed
  field of characteristic zero (with group of units $\ktimes$).

(ii) $q \ne 0,1$ is an element of $k$.

(iii) $n$ is a positive integer, and $\bfq= (\qij)$ is a multiplicatively
antisymmetric $\nxn$ matrix over $k$ (i.e., $q_{ii}=1$ and $\qij = \qji^{-1} \in
\ktimes$ for $1 \leq i,j \leq n$).

(iv) The multiplicative subgroup $\langle \qij \rangle = \langle \qij
\mid 1 \leq i,j \leq n \rangle$ of $\ktimes$ is torsion free. (Note
that the rank of this free abelian group can be no larger than
$n(n-1)/2$.)

(v) $A= \calO_{\bfq}(k^n)$ is the $k$-algebra presented by generators
$x_1,\ldots,x_n$ and relations $x_ix_j = \qij x_j x_i$, for $1 \leq
i,j \leq n$. This algebra is commonly referred to as a \emph{multiparameter quantum affine $n$-space over $k$.}

(vi) $K= k[z]_{\langle(z-1)(z-q)\rangle}$ is the localization of a polynomial ring $k[z]$ at the semimaximal ideal $\langle(z-1)(z-q)\rangle$. (If desired, $K$ can be replaced by a finitely generated subalgebra, as noted in (\ref{calculation}).) Write $\Ktimes$ for the group of units of
$K$. Note that there are well-defined evaluation maps
$$\gamma_1: K\rightarrow k \qquad\qquad\text{and}\qquad\qquad \gamma_q: K\rightarrow k,$$
given by $\gamma_1(f)= f(1)$ and $\gamma_q(f)= f(q)$. Moreover, if $f\in\Ktimes$, then $\gamma_1(f),\gamma_q(f) \in \ktimes$.
\end{setup}

Our goal is to realize $\spec A$ via a suitable semiclassical
limit. Our approach depends essentially on \cite{OhParShi}, which in
turn relied on \cite{GooLet2},\cite{GooLet1},\cite{Oh}.

\begin{note} \label{gamma}
(Following \cite[\S 4]{GooLet1} and \cite[\S 1]{GooLet2};
cf.~\cite[\S 4]{BroGoo}.)

  (i) Let $\Gamma = \Zn$, with standard basis elements
  $\epsilon_1,\ldots,\epsilon_n$. For $s = (s_1,\ldots,s_n)$ and $t =
  (t_1,\ldots,t_n)$ in $\Gamma$, set
\[\sigma(s,t) = \prod_{i,j =1}^n \qij^{s_it_j} .\]
Then $\sigma: \Gamma\times\Gamma \rightarrow \ktimes$ is an alternating bicharacter: 
\[ \sigma(s,s) = 1, \qquad \sigma(s,t) = \sigma(t,s)^{-1}, \qquad
\sigma(s,t + u) = \sigma(s,t)\sigma(s,u), \]
for $s,t,u \in \Gamma$. Moreover, the subgroup $\langle \im \sigma
\rangle$ of $\ktimes$ is equal to $\langle \qij \rangle$.

(ii) Let $\Gplus$ denote the submonoid of $\Gamma$ of $n$-tuples
without negative entries. For $s = (s_1,\ldots,s_n)$ and $t =
  (t_1,\ldots,t_n)$ in $\Gplus$, let
$x^s$ denote the monomial
\[ x_1^{s_1}\cdots x_n^{s_n} \in A.\]
Note, for all $s,t \in \Gplus$, that
\[x^sx^t = \sigma(s,t)x^tx^s .\]
Also note that $\qij = \sigma(\epsilon_i,\epsilon_j)$, for $1 \leq i,j,
\leq n$.
\end{note}

\begin{note} \label{existc}
Since $\langle \qij \rangle$ is torsion free, $-1\notin \langle \qij\rangle$. Hence, it follows
  from \cite[Lemma 4.2]{GooLet1} that there exists an alternating
  bicharacter $c : \Gamma \times \Gamma \rightarrow \ktimes$ such that
  $\sigma(s,t) = c(s,t)^2$, for all $s,t \in \Gamma$, and such that
  $\sigma(s,t) = 1$ if and only if $c(s,t) = 1$. In the proof of \cite[Lemma 4.2]{GooLet1}, $c$ is constructed so that the subgroup $\Lambda = \langle \im c \rangle$ of $\ktimes$ is contained in a divisible hull of $\langle \qij \rangle$. Since $\langle \qij\rangle$ is torsionfree, so is its divisible hull, and therefore $\Lambda$ is torsionfree.
 \end{note}

 \begin{note} \label{kcGplus} We now form the twisted monoid algebra
   $\kcGplus$, with $k$-basis $\{x^s \mid s \in \Gplus\}$, and with
   multiplication given via $x^s * x^t = c(s,t)x^{s+t}$, for $s,t \in
   \Gplus$. (The notational overlap with (\ref{gamma}ii) will be
   resolved momentarily.) Note that
\[ x^s * x^t = c(s,t)^2x^t*x^s = \sigma(s,t)x^t*x^s \]
for $s,t \in \Gplus$. Hence, the assignment $x_i \mapsto
x^{\epsilon_i}$, for $1 \leq i \leq n$, induces an isomorphism from
$A$ onto $\kcGplus$. 

Henceforth, we identify $A$ with $\kcGplus$, via the above
isomorphism. In particular, the monomial $x^s$ of (\ref{gamma}ii), for
$s \in \Gplus$, is identified with the basis element $x^s$ of
$\kcGplus$.
\end{note}

\begin{note} \label{lambda} Suppose that $\lambda_1,\ldots,\lambda_m$ form a basis for
  $\Lambda$, with
\[ c(s,t) = \lambda_1^{\ell_1{(s,t)}}\cdots \lambda_m^{\ell_m{(s,t)}} \]
for $s,t \in \Gamma$, and for suitable (unique) alternating biadditive maps $\ell_i: \Gamma\times\Gamma \rightarrow \Z$.
\end{note}

\begin{note} \label{calculation} The field $k$, being algebraically
  closed, must be infinite dimensional over $\Q$. Choose $\Q$-linearly
  independent elements $\mu_1,\dots,\mu_m \in k$. Observe that
  the matrix
$$\left[\begin{matrix} 1&1&1\\ q^2&q&1\\ 2&1&0 \end{matrix}\right] \in M_3(k)$$
has determinant $(q-1)^2 \ne 0$ and so is invertible. Hence, for $1\le
i\le m$, there are unique scalars $a_i,b_i,c_i \in k$ such that the
quadratic polynomial
\[ f_i(z) = a_iz^2 + b_iz + c_i  \]
satisfies the following conditions:
$$f_i(1) = 1, \qquad\qquad f_i(q) = \lambda_i, \qquad\qquad f_i'(1) =
\mu_i \,,$$
where $f'(z)$ denotes the formal derivative of a rational function
$f(z) \in k(z)$. The displayed properties are all that we require of
the polynomials $f_i$. In particular, they need not be quadratic.

Note that $f_1,\dots,f_m \in \Ktimes$, because neither $z-1$ nor $z-q$ is a factor of any $f_i$. Since these are the key properties needed for $K$, we could replace $K$ by the affine algebra $k[z][f_1^{-1},\dots,f_m^{-1}]$, if desired.

Further set
\[ \tc(s,t) =  f_1^{\ell_1{(s,t)}}\cdots f_m^{\ell_m{(s,t)}} \]
for $s,t \in \Gamma$, where $\ell_1,\ldots,\ell_m$ are as in
(\ref{lambda}). Then $\tc : \Gamma \times \Gamma \rightarrow \Ktimes$
is an alternating bicharacter, such that
$$\tc(s,t)(q)=  \lambda_1^{\ell_1{(s,t)}}\cdots \lambda_m^{\ell_m{(s,t)}}= c(s,t)$$
for all $s,t \in \Gamma$.

Set $\tL = \langle \im \tc \rangle= \langle f_1,\dots,f_m\rangle \subset \Ktimes$. \end{note}

\begin{lem} \label{gq} {\rm (i)} The specialization $z \mapsto q$
  induces a group isomorphism
\[ \gq : \tL \xrightarrow{\ f(z) \mapsto f(q)\ } \Lambda .\]

{\rm (ii)} The elements $f_1,\ldots,f_m$ form a basis for $\tL$, and so $\tL$
is free abelian of rank $m$.
\end{lem}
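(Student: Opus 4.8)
The plan is to exploit the evaluation homomorphism $\gq$ directly. First, since $\gq\colon K\to k$ is a ring homomorphism, it carries units to units, so its restriction to $\Ktimes$ is a group homomorphism $\Ktimes\to\ktimes$; in particular it restricts further to a group homomorphism $\gq\colon\tL\to\ktimes$. By the construction in (\ref{calculation}) we have $\gq(f_i)=f_i(q)=\lambda_i$ for each $i$, so the image of this restriction is the subgroup generated by $\lambda_1,\dots,\lambda_m$, which is precisely $\Lambda$ by (\ref{existc}) and (\ref{lambda}). Thus $\gq$ restricts to a surjective group homomorphism $\tL\rightarrow\Lambda$, and it remains to establish injectivity together with statement (ii).

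Next I would introduce the free abelian group $\Z^m$ and the group homomorphism $\phi\colon\Z^m\to\tL$ sending the $i$-th standard basis vector to $f_i$. Since $\tL=\langle f_1,\dots,f_m\rangle$ and $\tL$ is abelian (being a subgroup of the abelian group $\Ktimes$), the map $\phi$ is surjective. Composing, the homomorphism $\gq\circ\phi\colon\Z^m\to\Lambda$ sends the $i$-th standard basis vector to $\lambda_i$; because $\lambda_1,\dots,\lambda_m$ form a basis of $\Lambda$ by (\ref{lambda}), this composite is an isomorphism.

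The remaining deduction is purely formal. Since $\gq\circ\phi$ is injective, so is $\phi$; being already surjective, $\phi$ is an isomorphism $\Z^m\xrightarrow{\ \sim\ }\tL$. This proves (ii): the elements $f_1,\dots,f_m$ form a basis of $\tL$, so $\tL$ is free abelian of rank $m$. Finally, writing the restriction of $\gq$ to $\tL$ as $(\gq\circ\phi)\circ\phi^{-1}$ exhibits it as a composite of two isomorphisms, hence an isomorphism, which is (i).

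I do not anticipate a serious obstacle. The only points needing a word of care are that evaluation at $q$ sends units of $K$ to units of $k$, so that $\gq$ is defined on all of $\tL$ --- this is recorded in (\ref{setup})(vi) --- and the remark that ``basis'' is meaningful for $\tL$ because it is a subgroup of the abelian group $\Ktimes$ and is generated by finitely many elements. Everything else is a two-step diagram chase through $\Z^m$; one could equally well phrase it without $\phi$ by arguing directly that a relation $f_1^{a_1}\cdots f_m^{a_m}=1$ in $\tL$ forces $\lambda_1^{a_1}\cdots\lambda_m^{a_m}=1$ in $\Lambda$, whence all $a_i=0$ since the $\lambda_i$ are a basis.
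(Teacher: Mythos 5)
Your proposal is correct and follows essentially the same route as the paper's proof: both arguments rest on the single observation that $\gq(f_i)=\lambda_i$, so the generators of $\tL$ map to a basis of $\Lambda$, forcing them to be a basis of $\tL$ and $\gq$ to restrict to an isomorphism. Your detour through $\Z^m$ merely makes explicit the formal deduction that the paper compresses into ``we see both that\dots''.
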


\begin{proof} Consider the group homomorphism
  \[\gq : \Ktimes \xrightarrow{\ f(z) \mapsto f(q)\ } \ktimes .\]
  Note, for $1 \leq i \leq m$, that $\gq(f_i) = \lambda_i$. Since
  $f_1,\ldots,f_m$ generate $\tL$ and $\lambda_1,\ldots,\lambda_m$
  form a basis for $\Lambda$, we see both that the $f_i$ form a basis
  for $\tL$ and that $\gq$ maps $\tL$ isomorphically onto $\Lambda$.
\end{proof}

\begin{nonumrem} By the construction, $\gq(\tc(s,t))
  = c(s,t)$, for all $s,t \in \Gamma$.
\end{nonumrem}

Let $\kplus$ denote the additive group underlying the field $k$.

\begin{lem} \label{psi} The rule $\psi(f)= f'(1)$ gives a well-defined
  injective group homomorphism
\[ \psi : \tL \xrightarrow{\ f(z) \mapsto f'(1)\ } \kplus. \]
\end{lem}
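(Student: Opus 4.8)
The plan is to verify that $\psi$ is a well-defined group homomorphism and then that it is injective, using the explicit generators $f_1,\dots,f_m$ of $\tL$ supplied in (\ref{calculation}).

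First I would check that $\psi$ is a homomorphism. The group operation in $\tL \subset \Ktimes$ is multiplication of rational functions, and the target operation in $\kplus$ is addition in $k$. For $f,g \in \tL$ the Leibniz rule gives $(fg)'(z) = f'(z)g(z) + f(z)g'(z)$, and evaluating at $z=1$ together with $f(1)=g(1)=1$ (which holds because every element of $\tL$ is a product of powers of the $f_i$, each of which satisfies $f_i(1)=1$, so the product satisfies $h(1)=1$ as well) yields $(fg)'(1) = f'(1) + g'(1)$. Thus $\psi(fg) = \psi(f) + \psi(g)$, and in particular $\psi$ is well defined as a map into $\kplus$ (it is manifestly well defined as a function on rational functions; the only point to note is that the image really does land in the additive group of $k$, and there is no torsion-type obstruction since $\kplus$ is torsionfree and $\tL$ is free).

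For injectivity, suppose $h \in \tL$ with $\psi(h) = h'(1) = 0$. Write $h = f_1^{a_1}\cdots f_m^{a_m}$ for the (unique, by Lemma \ref{gq}(ii)) exponents $a_i \in \Z$. Applying the homomorphism property, $0 = \psi(h) = \sum_{i=1}^m a_i\, \psi(f_i) = \sum_{i=1}^m a_i\, f_i'(1) = \sum_{i=1}^m a_i \mu_i$, since $f_i'(1) = \mu_i$ by construction. But the $\mu_i$ were chosen $\Q$-linearly independent in $k$, hence in particular $\Z$-linearly independent, so all $a_i = 0$ and therefore $h = 1$. This shows $\ker\psi$ is trivial.

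The step requiring the most care is simply making sure the hypothesis $f(1)=1$ is available for \emph{every} element of $\tL$, not just for the generators $f_i$ — but this is immediate since $\tL = \langle f_1,\dots,f_m\rangle$ and $z\mapsto f(1)=\gamma_1(f)$ is itself a group homomorphism $\Ktimes \to \ktimes$ carrying each $f_i$ to $1$, hence carrying all of $\tL$ to $1$. With that in hand, the Leibniz computation and the linear-independence argument are entirely routine, so I do not anticipate a genuine obstacle; the proof is short.
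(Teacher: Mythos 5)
Your proof is correct and follows essentially the same route as the paper's: the Leibniz rule combined with $\gamma_1(\tL)=1$ gives the homomorphism property, and injectivity follows from the freeness of $\tL$ on $f_1,\dots,f_m$ (Lemma \ref{gq}(ii)) together with the $\Z$-linear independence of the $\mu_i=f_i'(1)$. The only point the paper makes slightly more explicitly is that $K$ is closed under formal differentiation, so $f'(1)$ makes sense for every $f\in\tL$; your remark that this is "manifest" is acceptable since each $f\in\tL$ is a rational function regular at $z=1$.
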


\begin{proof} Observe that $K$ is closed under formal
  differentiation, and so $f'(1)$ is defined for $f\in K$. Thus,
  $\psi$ is a well defined map from $\tL$ to $k$. Now $f_i(1) = 1$ for
 $1 \leq i \leq m$, and so $f(1) = 1$ for all $f \in
  \tL$. Therefore, for all $f,g \in \tL$,
\[ \psi(fg) = f'(1)g(1) + f(1)g'(1) = f'(1) + g'(1)= \psi(f)+ \psi(g), \]
proving that $\psi$ is a group homomorphism. Second, for $1 \leq i \leq m$,
\[ \psi(f_i) = f'_i(1) = \mu_i. \]
Since $\mu_1,\ldots,\mu_m$
are $\Z$-linearly independent, we conclude that $\psi$ is injective.
\end{proof}

\begin{note} \label{Rq}
 Now set $\tR = \KtcGplus$, the twisted monoid
  $K$-algebra with $K$-basis $\{x^s\mid s\in \Gplus\}$ and multiplication given via $x^s * x^t = \tc(s,t)x^{s+t}$, for $s,t \in \Gplus$. We again use $x_i$ to
  denote $x^{\epsilon_i}$, for $1 \leq i \leq n$. 

  Recall that $\qij= \sigma(\epsilon_i,\epsilon_j) = c(\epsilon_i,\epsilon_j)^2$, for $1 \leq i,j
  \leq n$, and set
  \[ \tqij = \tqij(z) = \tc(\epsilon_i,\epsilon_j)^2 \in \Ktimes.\]
Then 
\[ \tR = K\langle x_1,\ldots,x_n \mid x_i*x_j = \tqij x_j*x_i \text{\
  for\ } 1\le i,j\le n \rangle.\]
For $\mu \in \ktimes$, set 
\[ R_\mu = \tR/\langle z - \mu \rangle .\]
We see that $R_q \cong A$, since
$$\tqij(q)= \gamma_q(\tqij)= \gamma_q \tc(\epsilon_i,\epsilon_j)= c(\epsilon_i,\epsilon_j)= \qij$$
 for all $i$, $j$,
and we use this isomorphism to identify $A$ with $R_q$. Under this
identification, the cosets $x_i+ \langle z - q \rangle \in R_q$
correspond to the elements $x_i\in A$. \end{note}

\begin{note} \label{semiclassical_limit}
 Next, consider the group homomorphism
\[ \gamma_1 : \Ktimes \xrightarrow{\ f(z) \mapsto f(1)\ } \ktimes .\]
Note that $\gamma_1(\tL) = 1$ and, in particular, that
$\gamma_1(\tc(s,t)) = 1$ for all $s,t \in \Gamma$. We therefore have
an isomorphism from $R_1$ onto $R := k[x_1,\ldots,x_n]$, sending
\[ x_i + \langle z-1\rangle \quad \longmapsto \quad x_i \,  ,\]
and we identify $R_1$ with $R$ via this map. We also identify $R$ and
$R_1$ with the commutative monoid algebra $\kGplus$, with $k$-basis $\{x^s\mid s\in \Gplus\}$ and multiplication given by $x^sx^t =
x^{s+t}$ for $s,t \in \Gplus$.

Since $\tR/\langle z-1\rangle = R$ is commutative and $z-1$ is a central non-zero-divisor in $\tR$, there is a Poisson bracket on $R$ as in (\ref{sclim}), and $R$ becomes the semiclassical limit of $A = R_q$ (or, more accurately, the
semiclassical limit of the family of algebras $R_\mu$).
The Poisson bracket on $R$ is given by
$$\{\abar,\bbar\}= \left.\dfrac{ab-ba}{z-1}\right|_{z=1}$$
for all $a,b\in \tR$, where $\abar$ and $\bbar$ denote the cosets of $a$ and $b$ in $R$. In particular,
\[ \{x_i,x_j\} = \left.\left(\frac{\tqij(z) -1}{z-1}
  \right|_{z=1}\right)x_ix_j  = \tqij'(1)x_ix_j\]
for $1\le i,j\le n$. (The last equality holds because $\tqij(1)=
\gamma_1(\tqij)= 1$.) 

We treat $R$ as a Poisson algebra in this way, for the remainder of
the paper. In the notation of \cite[3.1]{OhParShi}, $R= k_u\Gplus$,
where $u:\Gamma \times\Gamma \rightarrow k$ is the alternating
biadditive map such that $u(\epsilon_i,\epsilon_j)= \tqij'(1)$, for
$1\le i,j\le n$. In this notation,
$$\{x^s,x^t\}= u(s,t)x^sx^t$$
for $s,t\in \Gplus$.
\end{note}

\section{The Homeomorphisms}

Retain the notation of the previous section. In particular, 
\[ A = R_q = k\langle x_1,\ldots,x_n \mid x_i x_j = \qij x_j
x_i \; \text{for} \; 1 \leq i,j \leq n \rangle  \]
is as in (\ref{Rq}), with $\qij = \tqij(q)$, and
\[ R = R_1 = k[x_1,\ldots,x_n]\]
is the Poisson algebra with bracket
\[ \{ x_i,x_j\} \; = \; \tqij'(1)x_ix_j  \, ,\]
following (\ref{semiclassical_limit}). In particular, $R$ is a
semiclassical limit of $A$. 

\begin{note} \label{pspec} (Following
  \cite{BroGor},\cite{Goo2},\cite{Oh},\cite{OhParShi}.) An ideal $I$
  of $R$ is a \emph{Poisson ideal\/} if $\{R,I\} \subseteq I$. We let
  $\pspec R$ denote the (Zariski) subspace of $\spec R$ comprised of
  the prime Poisson ideals. Each maximal ideal $\mfrak$ of $R$
  contains a unique largest Poisson ideal $\calP(\mfrak)$, called the
  \emph{Poisson core\/} of $\mfrak$. The Poisson cores of the maximal
  ideals of $R$ are termed \emph{Poisson primitive ideals\/}
  \cite[3.2]{BroGor} (or \emph{symplectic ideals\/} \cite[Definition
  1.2]{Oh}) and are prime (\cite[3.2]{BroGor} or \cite[Lemma
  1.3]{Oh}). The subspace of $\pspec R$ consisting of the Poisson
  primitive ideals will be denoted $\pprim R$. The \emph{Poisson
    center\/} $Z_p(R)$ is the set of $z \in R$ such that $\{R,z\} =
  0$.
\end{note}

In our main result, Theorem \ref{mainthm}, we will describe a
homeomorphism from $\pspec R$ onto $\spec A$.

\begin{note} \label{OPSnotation} To proceed further, we need more of
  the notation of \cite{OhParShi}.

  (i) Let $W$ denote the set of subsets of $\{1,\ldots,n\}$, and let
  $w \in W$.

  (ii) Let $I_w$ denote the ideal of $R$ generated by the $x_i$ for $i
  \in w$, and let $Y_w$ denote the multiplicatively closed subset of
  $R/I_w$ generated by $1$ and the cosets of the $x_j$ for $j
  \notin w$. Let $R_w$ denote the localization of $R/I_w$ at $Y_w$. We
  let each $x_i$ also denote its image in $R_w$.

  (iii) Let $\Gw$ denote the subgroup of $\Gamma$ generated by the
  basis elements $\epsilon_j$ for $j \notin w$, and let $c_w$ denote
  the restriction of $c$, defined in (\ref{existc}), to $\Gw \times \Gw$.

  (iv) Identify $R_w$ with the group algebra $\kGw$, via $x_j
  \leftrightarrow x^{\epsilon_j}$, for $j \notin w$.

  (v) Set $H = \Hom(\Gamma, \ktimes)$, which is a group under
  pointwise multiplication, isomorphic to the algebraic
  torus $(\ktimes)^n$. This group acts on $R= \kGplus$ and on $R_w = \kGw$ by $k$-algebra
  automorphisms such that
  \[ h.x^s = \langle h, s \rangle h_s \, ,\]
  for  $h \in H$ and $s\in \Gplus$ or $s \in \Gw$.  Further, set
  \begin{align*}
    S_w &= \rad(c_w)= \{ s \in \Gamma_w \mid c_w(s,t) = 1 \text{\ for
      all\ } t \in
    \Gw \},  \\
    \Spw &= \{ h \in H \mid \langle h, s \rangle = 1\text{\ for all\ }
    s \in S_w \}.
  \end{align*}
Then $\Spw$ is a subgroup of $H$, and it acts on both $R$ and $R_w$ through the $H$-action.

To match the notation of \cite[3.4]{Goo1}, let $\sigma_w$ denote the restriction of $\sigma= c^2$ to $\Gamma_w$. The conditions on $c$ in (\ref{existc}) then imply that
$$S_w= \{ s \in \Gamma_w \mid \sigma_w(s,t) = 1 \text{\ for all\ } t \in
    \Gw \}= \rad(\sigma_w).$$

(vi) Let $\spec_w R$ denote the set of prime ideals of $R$ that
contain $x_i$ for $i \in w$ but do not contain $x_j$ for $j \notin
w$. Localization provides a natural homeomorphism between $\spec_wR$
and $\spec R_w$, and this homeomorphism is $H$-equivariant.

(vii) For $P \in \spec_w R$, let $(P: \Spw)$ denote the intersection of
the prime ideals in the $\Spw$-orbit of $P$.
\end{note}

\begin{note} \label{phi} Both $R$ and $A$ have $k$-basis $\{ x^s \mid
  s\in \Gplus \}$. We will use $\Phi : A \rightarrow R$ to denote the
  $k$-linear isomorphism such that $\Phi(x^s)= x^s$ for all $s\in
  \Gplus$.
\end{note}

\begin{note} \label{pspecw}
(i) For $w \in W$, set
\begin{align*}
 \pspec_w R &= \pspec R \cap \spec _w R \\
  \pprim_w R &= \pprim R \cap\spec_w R .
  \end{align*}

(ii) The maps $\pspec R \rightarrow \spec A$ and $\pprim R \rightarrow \prim A$ used in 
\cite[Theorem 3.5]{OhParShi} are defined by the formula
\[ P\longmapsto \Phi^{-1}(P:\Spw) .\]
for $w \in W$ and $P \in \pspec_wR$. Moreover, as mentioned in the
proof of \cite[Proposition 3.4]{OhParShi}, $(P:\Spw)=P$ for all $P\in
\pspec_w R$ under our present hypotheses, and so the formula reduces to
\[ P\longmapsto \Phi^{-1}(P). \]
Since this key point is somewhat hidden in \cite{OhParShi}, we excerpt
the result and its proof in the next lemma. Before doing so, however,
we need one more ingredient:

(iii) Recall the isomorphism $\gq$ of (\ref{gq}) and the homomorphism $\psi$ of
(\ref{psi}). Let $\varphi$ denote the composite homomorphism
\[\Lambda \xrightarrow{\ \gq^{-1}\ } \tL \xrightarrow{\ f \mapsto f^2\ } \tL
\xrightarrow{\ \psi\ } \kplus , \]
which is injective because $\varphi(\lambda_i)= \psi(f_i^2)= 2\mu_i$ for $1\le i\le n$. Recalling $u$ from
(\ref{semiclassical_limit}), observe that
\[ \varphi(c(\epsilon_i,\epsilon_j)) =
\psi\bigl( \gamma_q^{-1}(c(\epsilon_i,\epsilon_j))^2 \bigr)= \psi\gamma_q^{-1}(\qij)= \psi(\tqij)  = \tqij'(1)= u(\epsilon_i,\epsilon_j) \]
for $1\le i,j\le n$, and so $u=\varphi c$. 
\end{note}

\begin{lem} \label{Spwstable} {\rm \cite{OhParShi}} Let $w\in W$, and
  let $P$ be a Poisson prime ideal in $\spec_w R$. Then $P$ is stable
  under the action of $\Spw$. Consequently, $P = (P : \Spw)$.
\end{lem}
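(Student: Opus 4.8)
The plan is to show that the action of $\Spw$ on $R_w = \kGw$ fixes the localized prime corresponding to $P$, and then transport this back to $P$ itself via the $H$-equivariant homeomorphism of (\ref{OPSnotation}vi). Concretely, let $\overline{P}$ denote the image of $P$ under the homeomorphism $\spec_w R \cong \spec R_w$; since this homeomorphism is $H$-equivariant and $\Spw \leq H$, it suffices to prove that $\overline{P}$ is $\Spw$-stable as a prime of $R_w$. So the real content is a statement about Poisson primes in the Poisson Laurent-polynomial algebra $R_w = \kGw$ with bracket $\{x^s, x^t\} = u_w(s,t) x^{s+t}$, where $u_w$ is the restriction of $u$ to $\Gw$.

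The key step is to identify the stabilizer-relevant subgroup of $H$ directly in terms of the Poisson structure. First I would observe, using $u = \varphi c$ from (\ref{phi}) (more precisely (\ref{pspecw}iii)) and the injectivity of $\varphi$, that $\rad(u_w) = \{ s\in \Gw \mid u(s,t) = 0 \text{ for all } t\in\Gw\}$ coincides with $\rad(c_w) = S_w$, since $u(s,t) = \varphi(c(s,t))$ vanishes exactly when $c(s,t) = 1$. Hence $\Spw = \{h\in H \mid \langle h,s\rangle = 1 \text{ for all } s\in S_w\}$ is precisely the subgroup of $H$ that acts trivially on the Poisson center $Z_p(R_w)$ — indeed $Z_p(R_w)$ is spanned by the monomials $x^s$ with $s\in \rad(u_w) = S_w$. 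The next step is the general Poisson fact (this is essentially \cite[Proposition 3.4]{OhParShi}, or can be argued from scratch): in a Poisson torus $\kGw$, every Poisson prime $Q$ is induced from its contraction to the Poisson center, i.e. $Q = (Q\cap Z_p(R_w)) R_w$, because the symplectic-core / Poisson-primitive structure is governed entirely by the central torus quotient. Since any $h\in H$ acts on $Z_p(R_w)$ by scaling the central monomials $x^s$ ($s\in S_w$) by $\langle h,s\rangle$, an element $h\in\Spw$ fixes $Z_p(R_w)$ pointwise, hence fixes $Q\cap Z_p(R_w)$, hence fixes $Q = (Q\cap Z_p(R_w))R_w$. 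Applying this with $Q = \overline{P}$ gives $\Spw$-stability of $\overline P$, and pulling back yields $P = h.P$ for all $h\in\Spw$, so $P = \bigcap_{h\in\Spw} h.P = (P:\Spw)$.

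An alternative, more self-contained route avoids citing the structure theorem: write $P$ explicitly. Every Poisson prime of the Poisson torus $\kGw$ (with $k$ algebraically closed, characteristic zero) has the form $P_\chi$, the kernel of the Poisson algebra map $\kGw \to \kGw/J$ determined by a character $\chi$ on the subgroup $\rad(u_w)^{\perp\perp}$-type data; more simply, one checks that $\overline P$ is generated by elements of the form $x^s - \alpha_s$ with $s\in S_w$ (forced by primeness and the derivation property, since $\{x^t, x^s - \alpha_s\} = u(t,s)x^{t+s} = 0$ for $s\in S_w$ means these elements behave centrally, while non-central monomials cannot lie in a proper Poisson prime in a controlled way). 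Then $h.(x^s-\alpha_s) = \langle h,s\rangle x^s - \alpha_s$, and for $h\in \Spw$ we have $\langle h,s\rangle = 1$, so $h$ fixes each generator, whence $h.\overline P = \overline P$.

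The main obstacle is the claim that a Poisson prime of the Poisson torus is induced from (or generated over) its intersection with the Poisson center. This is the technical heart — it requires knowing the classification of Poisson primes in $\kGw$, which rests on the characteristic-zero hypothesis and the fact that $\kGw/Z_p$-type quotients are Poisson-simple "up to the center." In the paper's setting this is supplied by the cited results of Oh–Park–Shin (and Oh), so I would lean on \cite[Proposition 3.4]{OhParShi} rather than reprove it; the remaining steps — reducing to $R_w$ via $H$-equivariance, matching $S_w$ with $\rad(u_w)$ via injectivity of $\varphi$, and the one-line orbit-intersection conclusion $P = (P:\Spw)$ — are then routine.
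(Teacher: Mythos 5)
Your proposal is correct and follows essentially the same route as the paper: localize to $R_w=\kGw$, use $u_w=\varphi c_w$ and injectivity of $\varphi$ to identify $Z_p(R_w)$ with $kS_w$, invoke the structural fact that a Poisson prime of the Poisson torus is generated by its contraction to the Poisson center, and conclude since $\Spw$ fixes $kS_w$ pointwise. The only difference is bookkeeping: the paper sources that structural fact to Vancliff's \cite[Lemma 1.2]{Van} rather than to \cite[Proposition 3.4]{OhParShi}, and your sketched ``from scratch'' justification of it is too loose to stand on its own, but you correctly flag it as the technical heart to be cited rather than reproved.
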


\begin{proof} Let $u_w$ denote the restriction of $u$ to $\Gw$, and
  note that $u_w= \varphi c_w$. The induced Poisson structure on the
  localization $R_w= k\Gw$ satisfies
$$\{x^s,x^t\}= u_w(s,t)x^sx^t$$
for $s,t\in \Gplus_w$, and so we have $R_w= k_{u_w}\Gamma$. 

The Poisson prime ideal $P/I_w$ in $R/I_w$ induces a Poisson prime
ideal $Q$ in $R_w$, which contracts to a prime ideal $Q'$ in the
Poisson center $Z_p(R_w)$. By \cite[Lemma 1.2]{Van} (which is valid
over any base field of characteristic zero), $Q$ is generated by $Q'$,
and $Z_p(R_w)$ equals the group algebra of the radical of
$u_w$. However,
$$ \rad(u_w)= \{s\in \Gw\mid u_w(s,t)=0 \text{\ for all\ } t\in \Gw\}
=\rad(c_w)= S_w\,, $$
because $u_w= \varphi c_w$ and $\varphi$ is injective. Thus, $Z_p(R_w)= kS_w$. By definition of $\Spw$, this group acts trivially on $kS_w$, and so it fixes $Q'$ (pointwise). Therefore $\Spw$ stabilizes $Q$, hence also $P/I_w$, and finally $P$. \end{proof}

We now apply \cite[Theorem 3.5]{OhParShi}. Note that the following theorem asserts, in particular, that the linear map $\Phi^{-1} : R\rightarrow A$ sends Poisson prime ideals of $R$ to prime \emph{ideals} of $A$.

\begin{thm} \label{mainthm} Let $A= \calO_\bfq(k^n)$, where $k$ is an
  algebraically closed field of characteristic zero, and where $\bfq=
  (\qij)$ is a multiplicatively antisymmetric $n\times n$ matrix over
  $k$ such that the group $\langle\qij\rangle \subseteq \ktimes$ is
  torsionfree. Let $R= k[x_1,\dots,x_n]$, equipped with the Poisson
  structure described in {\rm(\ref{semiclassical_limit})}, and let
  $\Phi:A\rightarrow R$ be the $k$-linear isomorphism of
  {\rm(\ref{phi})}. Then the rule $P \mapsto \Phi^{-1}(P)$
  determines a homeomorphism
\[ \pspec R \longrightarrow \spec A, \]
which restricts to a homeomorphism
\[ \pprim R \longrightarrow \prim A. \]
\end{thm}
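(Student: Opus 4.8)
The plan is to deduce Theorem~\ref{mainthm} from \cite[Theorem 3.5]{OhParShi}, whose statement provides exactly such homeomorphisms $\pspec R \to \spec A$ and $\pprim R \to \prim A$, but expressed via the formula $P \mapsto \Phi^{-1}(P:\Spw)$ of (\ref{pspecw}ii) rather than via $\Phi^{-1}$ alone. So the real content of our formulation is twofold: first, checking that the hypotheses of \cite[Theorem 3.5]{OhParShi} are met in our setting; second, verifying that under these hypotheses the twisted formula $P\mapsto \Phi^{-1}(P:\Spw)$ collapses to the clean formula $P\mapsto \Phi^{-1}(P)$.

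First I would recall the setup of \cite{OhParShi}: the Poisson algebra there is $R = k_u\Gplus$ for an alternating biadditive $u:\Gamma\times\Gamma\to k^+$, and the quantum algebra is $\calO_\bfq(k^n)$ with $\bfq$ recovered from an alternating bicharacter $c$ with $c^2 = \sigma$; the compatibility needed is precisely the relation $u = \varphi c$ for an injective homomorphism $\varphi: \Lambda \to k^+$, where $\Lambda = \langle\im c\rangle$. This is exactly what (\ref{pspecw}iii) establishes: we built $\varphi$ as the composite $\Lambda \xrightarrow{\gq^{-1}} \tL \xrightarrow{f\mapsto f^2} \tL \xrightarrow{\psi} k^+$, it is injective by Lemmas~\ref{gq} and \ref{psi} (since $\varphi(\lambda_i) = 2\mu_i$ and the $\mu_i$ are $\Q$-linearly independent), and the computation in (\ref{pspecw}iii) gives $u(\epsilon_i,\epsilon_j) = \tqij'(1) = \varphi(c(\epsilon_i,\epsilon_j))$, hence $u = \varphi c$. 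The torsionfreeness of $\langle\qij\rangle$, assumed in the theorem, is what made the constructions of $c$ (in (\ref{existc})) and of $\varphi$ possible. So the hypotheses of \cite[Theorem 3.5]{OhParShi} hold, and that theorem hands us homeomorphisms $\pspec R \to \spec A$ and $\pprim R \to \prim A$ given by $P \mapsto \Phi^{-1}(P:\Spw)$ for $P \in \pspec_w R$.

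It remains to show $(P:\Spw) = P$ for every $P \in \pspec_w R$, which reduces the formula to $P \mapsto \Phi^{-1}(P)$. This is exactly Lemma~\ref{Spwstable}: a Poisson prime $P \in \spec_w R$ is stable under $\Spw$, so the intersection $(P:\Spw)$ of the ideals in its $\Spw$-orbit is just $P$ itself. Thus on each stratum $\pspec_w R$ the two formulas agree, and since $\pspec R = \bigsqcup_{w\in W}\pspec_w R$ (every prime of $R = k[x_1,\dots,x_n]$ lies in exactly one $\spec_w R$, according to which of the $x_i$ it contains), the global map $\pspec R \to \spec A$ coming from \cite{OhParShi} is literally $P \mapsto \Phi^{-1}(P)$, and likewise for $\pprim$. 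This identification is what makes the stated homeomorphism also show that the linear isomorphism $\Phi^{-1}:R\to A$ carries Poisson primes to primes, as remarked before the theorem.

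I expect the main obstacle to be a bookkeeping matter rather than a deep one: making sure the notational dictionary between our $(R, A, c, u, \varphi, \Gw, S_w, \Spw)$ and the corresponding objects of \cite{OhParShi} is exact, in particular that our $u$ agrees with their $u$ on all of $\Gamma$ (not just on the $\epsilon_i$, though by biadditivity this is automatic once it holds on a basis), that their genericity/compatibility hypothesis is precisely $u = \varphi c$ with $\varphi$ injective, and that their Theorem~3.5 really does state a homeomorphism of the full spectra (not just a bijection, and not just on individual strata). Once that alignment is pinned down, the proof is a two-line citation: apply \cite[Theorem 3.5]{OhParShi}, then invoke Lemma~\ref{Spwstable} to simplify the formula.
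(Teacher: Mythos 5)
Your proposal is correct and follows the same route as the paper: verify that $u=\varphi c$ with $\varphi$ injective so that $R=k_u\Gamma^+$ and $A=\kcGplus$ fit the framework of \cite[2.3, 3.1]{OhParShi}, then apply \cite[Theorem 3.5]{OhParShi} together with Lemma~\ref{Spwstable} to reduce $P\mapsto\Phi^{-1}(P:\Spw)$ to $P\mapsto\Phi^{-1}(P)$. The paper's proof is exactly this two-step citation, with the hypothesis-checking and the $(P:\Spw)=P$ simplification prepared in (\ref{pspecw}) and Lemma~\ref{Spwstable} beforehand.
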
 

\begin{proof} We have a homomorphism $\varphi$, from
  (\ref{pspecw}iii), and an alternating biadditive map $u$, from
  (\ref{semiclassical_limit}), exactly as described in
  \cite[2.3]{OhParShi}. We can identify $R$ as a Poisson $k$-algebra
  with $k_u\Gamma^+$, following the notation of \cite[3.1]{OhParShi};
  see (\ref{semiclassical_limit}). Similarly, we have identified $A$
  as a $k$-algebra with $\kcGplus$, also following the notation of
  \cite[3.1]{OhParShi}; see (\ref{kcGplus}). The theorem now follows
  directly from Lemma \ref{Spwstable} and \cite[Theorem
  3.5]{OhParShi}.
\end{proof}

\begin{note} \label{sympcore}
(i) Identify the affine space $k^n$ with the maximal
  ideal space of $R$. The rule $\mfrak \mapsto \calP(\mfrak)$ gives a
  surjective map $k^n \rightarrow \pprim R$, the fibers of which are
  called \emph{symplectic cores\/} \cite[3.3]{BroGor}, and are
  algebraic analogs of symplectic leaves (cf.~\cite[3.3, 3.5,
  3.7]{BroGor}). Specifically, the symplectic core containing a point
  $\mfrak$ is the set
$$\calC(\mfrak)= \{ \mfrak'\in k^n \mid \calP(\mfrak')= \calP(\mfrak) \}. $$

(ii) Let $\sympcore k^n$ denote the set of symplectic cores in
$k^n$. The rule $\mfrak \mapsto \calC(\mfrak)$ gives a surjective map
$k^n \rightarrow \sympcore k^n$, and we give $\sympcore k^n$ the
quotient (Zariski) topology via this map. By the definition of
symplectic cores, there is a bijection $\sympcore k^n \rightarrow
\pprim R$ such that $\calC(\mfrak) \mapsto \calP(\mfrak)$ for $\mfrak
\in k^n$.

(iii) It follows from \cite[Theorem 4.1(b)]{Goo2} that the Zariski
topology on $\pprim R$ is the quotient topology from the canonical map
$k^n \rightarrow \pprim R$ in (i). (To verify the hypotheses of
\cite[Theorem 4.1(b)]{Goo2}, observe that the action of the
torus $H$ on $R$ is a rational action by Poisson
automorphisms, and observe that $R$ has only finitely many $H$-stable prime
Poisson ideals, namely the ideals $I_w$ of
(\ref{OPSnotation}ii).) Hence, the bijection $\sympcore k^n \rightarrow
\pprim R$ of (ii) is a homeomorphism.
\end{note}

Combining these observations with Theorem \ref{mainthm}, we obtain

\begin{cor} \label{maincor}
Under the hypotheses of Theorem {\rm\ref{mainthm}}, there is a homeomorphism
\[ \sympcore k^n \longrightarrow \prim A \]
given by the rule $\calC(\mfrak) \mapsto \Phi^{-1}(\calP(\mfrak))$. \end{cor}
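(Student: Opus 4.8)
The plan is to assemble Corollary~\ref{maincor} from three ingredients, all of which have just been put in place. First, Theorem~\ref{mainthm} supplies a homeomorphism $\pprim R \to \prim A$ given by $P \mapsto \Phi^{-1}(P)$. Second, (\ref{sympcore}ii) gives a bijection $\sympcore k^n \to \pprim R$ sending $\calC(\mfrak) \mapsto \calP(\mfrak)$, coming directly from the definition of symplectic cores as the fibers of $\mfrak \mapsto \calP(\mfrak)$. Third, (\ref{sympcore}iii), via \cite[Theorem 4.1(b)]{Goo2}, upgrades that bijection to a homeomorphism, where $\pprim R$ carries its Zariski (subspace) topology and $\sympcore k^n$ carries the quotient topology from $k^n$. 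So I would simply compose:
\[ \sympcore k^n \xrightarrow{\ \calC(\mfrak)\mapsto \calP(\mfrak)\ } \pprim R \xrightarrow{\ P\mapsto \Phi^{-1}(P)\ } \prim A, \]
and observe that the composite sends $\calC(\mfrak)$ to $\Phi^{-1}(\calP(\mfrak))$, which is exactly the asserted rule. A composition of homeomorphisms is a homeomorphism, so the corollary follows.

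Before writing that down, I would double-check two compatibility points that make the composition legitimate. The first is that the topology on $\pprim R$ used in Theorem~\ref{mainthm} (namely the subspace topology inherited from $\pspec R \subseteq \spec R$) is the same one used in (\ref{sympcore}iii); inspecting (\ref{pspec}) and (\ref{pspecw}) confirms that $\pprim R$ is consistently defined as a Zariski subspace throughout, so there is no ambiguity. The second is that $\calP(\mfrak)$ is indeed a Poisson primitive ideal, hence lies in $\pprim R$, which is precisely the definition recalled in (\ref{pspec}); and that every element of $\pprim R$ arises this way, which is the surjectivity noted in (\ref{sympcore}i). With these in hand the two maps genuinely compose.

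Honestly, there is no real obstacle here: all the substance has been discharged in Theorem~\ref{mainthm} and in (\ref{sympcore}i)--(iii). The only thing requiring a sentence of care is verifying the hypotheses of \cite[Theorem 4.1(b)]{Goo2} invoked in (\ref{sympcore}iii) --- namely that $H$ acts rationally on $R$ by Poisson automorphisms and that $R$ has only finitely many $H$-stable Poisson prime ideals --- but the parenthetical remark in (\ref{sympcore}iii) already records exactly this, identifying those ideals as the $I_w$, $w \in W$. So in the write-up I would state the corollary's map as the composite of the homeomorphism of (\ref{sympcore}ii)--(iii) with the homeomorphism of Theorem~\ref{mainthm}, note that the composite is $\calC(\mfrak)\mapsto\Phi^{-1}(\calP(\mfrak))$, and conclude. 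The proof is essentially one line once the right arrows are lined up.
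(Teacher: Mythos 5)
Your proposal is correct and is exactly the paper's argument: the corollary is obtained by composing the homeomorphism $\sympcore k^n \rightarrow \pprim R$ from (\ref{sympcore}ii)--(iii) with the homeomorphism $\pprim R \rightarrow \prim A$ of Theorem \ref{mainthm}, and the paper offers no further justification beyond the phrase ``combining these observations.'' Your extra compatibility checks (consistency of the topology on $\pprim R$ and the hypotheses of \cite[Theorem 4.1(b)]{Goo2}) are sound and match what is already recorded in (\ref{sympcore}iii).
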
 

The symplectic cores $\calC(\mfrak)$, which make up the points of the
space $\sympcore k^n$ in Corollary \ref{maincor}, have good geometric
structure themselves: they are homogeneous smooth irreducible quasi-affine
varieties, as the following corollary shows. In keeping with the notation
of (\ref{pspecw}i), set
$$\Max_w R= \Max R\cap \spec_w R$$
for $w\in W$. Since we have identified $\Max R$ with $k^n$, the sets
$\Max_w R$ partition $k^n$.

\begin{cor} \label{geomofcores}
Let $w\in W$ and $\mfrak\in \Max_w R$. Then the symplectic core
$\calC(\mfrak)$ is a smooth irreducible locally closed subset of $k^n$,
and it equals the $\Spw$-orbit of $\mfrak$ in $k^n$.
\end{cor}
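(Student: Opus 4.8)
The plan is to identify the symplectic core $\calC(\mfrak)$ with the $\Spw$-orbit of $\mfrak$ in $k^n$, and then to read off the geometric assertions from the standard theory of orbits of connected algebraic groups. For the inclusion $\Spw.\mfrak\subseteq\calC(\mfrak)$, I would first note that, since $\mfrak\in\Max_wR$, the prime Poisson ideal $I_w$ of (\ref{OPSnotation}ii) lies in $\mfrak$, hence in the Poisson core $\calP(\mfrak)$; as $x_j\notin\mfrak$ for $j\notin w$, this puts $\calP(\mfrak)$ in $\pspec_wR$. Lemma~\ref{Spwstable} then gives that $\calP(\mfrak)$ is $\Spw$-stable, and since $H$ acts on $R$ by Poisson automorphisms the formation of Poisson cores is $H$-equivariant; hence $\calP(h.\mfrak)=h.\calP(\mfrak)=\calP(\mfrak)$, i.e.\ $h.\mfrak\in\calC(\mfrak)$, for every $h\in\Spw$.

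For the reverse inclusion, suppose $\mfrak'\in k^n$ satisfies $\calP(\mfrak')=\calP(\mfrak)$. If $\mfrak'\in\Max_{w'}R$, then $I_{w'}\subseteq\calP(\mfrak')=\calP(\mfrak)\in\spec_wR$ forces $w'\subseteq w$, and symmetrically $w\subseteq w'$, so $\mfrak'\in\Max_wR$. I would then transport the situation through the $H$-equivariant homeomorphism $\spec_wR\cong\spec R_w$ of (\ref{OPSnotation}vi): since $R\to R/I_w$ and localization at $Y_w$ are inclusion-preserving Poisson operations, $\calP(\mfrak)$ and $\calP(\mfrak')$ correspond to the Poisson cores of the associated maximal ideals of $R_w$. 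The proof of Lemma~\ref{Spwstable} shows $Z_p(R_w)=kS_w$, so by \cite[Lemma~1.2]{Van} the Poisson core of a maximal ideal $\mathfrak n$ of $R_w$ equals $(\mathfrak n\cap kS_w)R_w$. Writing maximal ideals of $R_w=k\Gw$ as characters of $\Gw$, two of them have the same Poisson core precisely when they restrict to the same character of $S_w$; and since restriction $H\to\Hom(\Gw,\ktimes)$ is surjective, carries $\Spw$ onto the group of characters of $\Gw$ that are trivial on $S_w$, and the $H$-action on $\Max R_w$ is by translation through this restriction, that happens precisely when the two maximal ideals lie in one $\Spw$-orbit. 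This yields $\calC(\mfrak)=\Spw.\mfrak$.

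It then remains to show this orbit is smooth, irreducible and locally closed, and here the hypothesis that $\langle\qij\rangle$ is torsionfree enters. I would check that $S_w=\rad(\sigma_w)$ is saturated in $\Gamma$: if $ks\in S_w$ with $k\ge1$, then $s\in\Gw$ and $\sigma_w(s,t)^k=\sigma_w(ks,t)=1$ for all $t\in\Gw$, so each $\sigma_w(s,t)\in\langle\qij\rangle$ is a torsion element, hence $1$, whence $s\in S_w$. Thus $\Gamma/S_w$ is torsionfree and $\Spw=\Hom(\Gamma/S_w,\ktimes)$ is an algebraic torus, in particular connected. Since the torus $H$, and a fortiori $\Spw$, acts rationally --- hence morphically --- on $k^n=\Max R$, the orbit $\Spw.\mfrak$ is the image of the irreducible variety $\Spw$ under a morphism and is an orbit of a connected algebraic group, so it is a smooth irreducible locally closed (indeed homogeneous) subset of $k^n$.

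The step I expect to be the main obstacle is the middle one: carrying the Poisson core faithfully through the quotient $R\twoheadrightarrow R/I_w$ and the localization to $R_w$, and then matching the Poisson-center condition (two characters of $\Gw$ agree on $S_w$) with the orbit condition (the corresponding maximal ideals lie in one $\Spw$-orbit). The remaining pieces --- Lemma~\ref{Spwstable}, the equivariant localization picture of (\ref{OPSnotation}), and the torsionfreeness of $\langle\qij\rangle$ --- should fit together routinely.
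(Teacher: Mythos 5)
Your proof is correct, but it takes a genuinely different route from the paper's. The paper obtains the orbit description indirectly: it shows $\calP(\mfrak)=(\mfrak:\Spw)$ (via \cite[Proposition 3.4]{OhParShi} and Lemma~\ref{Spwstable}), thereby identifying the core map $\mfrak\mapsto\Phi^{-1}(\calP(\mfrak))$ with the quotient map of \cite[Theorem 4.11]{GooLet1}, whose fibers over $\prim_w A$ are already known to be the $\Spw$-orbits; irreducibility and local closedness are then imported from \cite[Lemma 3.3]{BroGor} together with the Poisson Dixmier--Moeglin equivalence \cite[Theorem 4.3]{Goo2}, with smoothness in $k^n$ coming from \cite[Proposition 1.8]{Bor}. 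You instead compute the symplectic cores directly: the inclusion $\Spw.\mfrak\subseteq\calC(\mfrak)$ from $H$-equivariance of Poisson cores plus Lemma~\ref{Spwstable}, and the reverse inclusion by passing to the Poisson torus $R_w$ and using $Z_p(R_w)=kS_w$ and \cite[Lemma 1.2]{Van} to see that $\calP(\mathfrak n)=(\mathfrak n\cap kS_w)R_w$, so that two maximal ideals share a core exactly when their characters agree on $S_w$, i.e.\ exactly when they lie in one $\Spw$-orbit. This buys you independence from \cite[Theorem 4.11]{GooLet1} and from the Poisson Dixmier--Moeglin machinery, at the modest price of having to verify (as you do, via saturation of $S_w=\rad(\sigma_w)$ using torsionfreeness of $\langle\qij\rangle$) that $\Spw\cong\Hom(\Gamma/S_w,\ktimes)$ is a \emph{connected} torus, since you derive irreducibility of the core from connectedness of the group rather than from primeness of $\calP(\mfrak)$. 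The localization step you flag as the main obstacle is indeed routine: Poisson ideals correspond under passage to $R/I_w$ and under localization at $Y_w$, and $\calP(\mfrak)$ lies in $\spec_w R$, so Poisson cores transport faithfully.
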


\begin{proof} Irreducibility and local closedness will follow from
\cite[Lemma 3.3]{BroGor} once we know that all Poisson primitive ideals of
$R$ are locally closed points in $\pspec R$. The latter fact will follow
from the Poisson Dixmier-Moeglin equivalence of \cite[Theorem 4.3]{Goo2}.
As we have already noted in (\ref{sympcore}iii), the torus $H$ acts
rationally on $R$ by Poisson automorphisms, and there are only finitely
many $H$-stable Poisson prime ideals in $R$. Hence, the hypotheses of
\cite[Theorem 4.3]{Goo2} are satisfied. Investing that theorem into
\cite[Lemma 3.3]{BroGor}, we find that $\calC(\mfrak)$ is locally closed
and that its closure  is the set
$$\{\mfrak'\in \Max R \mid \mfrak' \supseteq \calP(\mfrak)\},$$
which is irreducible because $\calP(\mfrak)$ is a prime ideal. Therefore
$\calC(\mfrak)$ is irreducible. \cite[Lemma 3.3]{BroGor} also shows that
$\calC(\mfrak)$ is smooth in its closure. Smoothness in $k^n$ will follow
from the general theory of algebraic group actions (e.g.,
\cite[Proposition 1.8]{Bor}), once we exhibit $\calC(\mfrak)$ as an orbit
of an algebraic group.

The $\Spw$-orbit of $\mfrak$ appears as the fiber of the quotient map
$\Max R \rightarrow \prim A$ in \cite[Theorem 4.11]{GooLet1}. Hence, we
need to show that this quotient map, call it $\mu$, agrees with the one
obtained in our present setting, namely the map
$$\tau: k^n \longrightarrow \prim A, \qquad\qquad \mfrak\longmapsto
\Phi^{-1}(\calP(\mfrak)),$$
which is the composition of the homeomorphism in Corollary \ref{maincor}
with the quotient map $k^n \rightarrow \sympcore k^n$. By construction, the
fibers of $\tau$ are the symplectic cores in $k^n$.

Since $\mfrak\in \Max_w R$, \cite[Proposition 3.4]{OhParShi} shows that
$(\mfrak:\Spw)= (P:\Spw)$ for some $P\in \pspec_w R$. But $(P:\Spw)=P$ by
Lemma \ref{Spwstable}, and so $(\mfrak:\Spw)=P$ is a Poisson primitive
ideal. In particular, $(\mfrak:\Spw)$ is a Poisson ideal contained in
$\mfrak$, whence $(\mfrak:\Spw) \subseteq \calP(\mfrak)$. On the other
hand, the Poisson primitive ideal $\calP(\mfrak)$ is $\Spw$-stable by
Lemma \ref{Spwstable}. Since it is contained in $\mfrak$, it must be
contained in $(\mfrak:\Spw)$. Therefore $\calP(\mfrak)= (\mfrak:\Spw)$,
and we conclude that
$$\tau(\mfrak)= \Phi^{-1}(\mfrak:\Spw).$$
This shows that $\tau$ agrees with $\mu$, as desired. By \cite[Theorem
4.11]{GooLet1}, the fibers of $\mu$ over points in $\prim_w A$ consist
precisely of the $\Spw$-orbits within $k^n$. Therefore $\calC(\mfrak)$
equals the $\Spw$-orbit of $\mfrak$.

Note from its definition that $\Spw$ is a closed subgroup of the torus
$H$, so that it is an affine algebraic group. Since $H$ acts rationally on
$R$, its induced action on $\Max R= k^n$ is morphic, as is the
corresponding action of $\Spw$.  Standard results (e.g., \cite[Proposition
1.8]{Bor}) thus imply that the $\Spw$-orbit $\calC(\mfrak)$ is smooth (and
locally closed).
\end{proof}

When $k=\C$, an affine variety equipped with a Poisson structure can
also be partitioned into symplectic leaves (e.g., see
\cite[3.5]{BroGor}), and it has been a goal of research in quantum
groups to represent primitive spectra of quantized algebras as spaces
of symplectic leaves. This correspondence between
primitive ideals and symplectic leaves, however, can break down when
the symplectic leaves are not algebraic (i.e., not locally closed in
the Zariski topology), as noted by Hodges-Levasseur-Toro
\cite[p.~52]{HLT}, Vancliff \cite[Theorem 3.8]{Van}, and Brandl
\cite[Example 6.4]{Bra}. The following provides an explicit example of
this phenomenon, in the form of a quantum affine $3$-space.

\begin{exam} (i) Let $k=\C$, choose $\alpha\in \R\setminus\Q$, and
  choose $q\in \C$ transcendental over the field $\Q(\alpha)$. We
  would like to construct an example of the form $\calO_\bfq(\C^3)$,
  with some $\qij= q^\alpha$. However, that would require working with
  $z^\alpha$ in our semiclassical limit construction
  (\ref{semiclassical_limit}), and we cannot form $z^\alpha$ in
  $K$. To replace $z^\alpha$, we use the first-order Taylor approximation
  $1+\alpha (z-1)$, and consequently we use $1+\alpha (q-1)$ in place
  of $q^\alpha$ in the defining relations for this example. Because $q$ is transcendental over $\Q(\alpha$), the elements $\lambda_1= q$ and $\lambda_2= 1+\alpha (q-1)$ generate a free abelian subgroup of $\C^\times$ of rank $2$.
  
We now  take
$$\bfq= \left[ \begin{matrix} 1&\lambda_1^2&\lambda_2^2\\ \lambda_1^{-2}&1&1\\ \lambda_2^{-2}&1&1 \end{matrix} \right] $$
and form $A= \calO_\bfq(\C^3)$. 

(ii) The primitive spectrum of $A$ is easily calculated by the methods of \cite{GooLet2}, as follows. Let $W$ denote the set of subsets of $\{1,2,3\}$, and for $w\in W$ set
\begin{align*} J_w &= \langle x_i\mid i\in w\rangle \in \spec A \\
A_w &= (A/J_w)[x_j^{-1}\mid j\notin w] \\
\prim_w A &= \{ P\in \prim A \mid P\cap\{x_1,x_2,x_3\}= \{x_i\mid i\in w\}\,\} \\
S_w &= \{a \in \Z^3 \mid a_i=0 \text{\ for\ } i\in w \text{\ and\ } \prod_{i\notin w} q_{ij}^{a_i}=1 \text{\ for all\ } j\notin w\}.
\end{align*}
Then $\prim A$ is the disjoint union of the sets $\prim_w A$, and each $\prim_w A$ is homeomorphic to $\prim A_w$ via localization and contraction \cite[Theorem 2.3]{GooLet2}. Moreover, since $A_w$ is a quantum torus, $\prim A_w$ consists precisely of the ideals induced from maximal ideals of the center $Z(A_w)$ \cite[Corollary 1.5]{GooLet2}, and $Z(A_w)$ is spanned by the (cosets of the) monomials $x^a$ for $a\in S_w$ (e.g., \cite[Lemma 1.2]{GooLet2}). In particular, when $Z(A_w)=\C$ (equivalently, when $A_w$ is simple), $\prim_w A$ consists of just the ideal $J_w$. In the present example, this occurs in the cases $w=\varnothing, \{2\}, \{3\}, \{1,2,3\}$. Hence,
\begin{align*} 
\prim_{\varnothing} A &= \{\langle0\rangle\}  &\prim_{\{2\}} A &= \{\langle x_2\rangle\} \\
\prim_{\{3\}} A &= \{\langle x_3\rangle\}  &\prim_{\{1,2,3\}} A &= \{\langle x_1,x_2,x_3\rangle\}.
\end{align*}
In the remaining four cases, $A_w$ is a commutative Laurent polynomial ring over $\C$, and so we obtain
\begin{align*}
\prim_{\{1\}} A &= \{ \langle x_1,x_2-b,x_3-c\rangle \mid b,c\in \C^\times\}  \\
\prim_{\{1,2\}} A &= \{ \langle x_1,x_2,x_3-c\rangle \mid c\in \C^\times\}  \\
\prim_{\{1,3\}} A &= \{ \langle x_1,x_2-b,x_3\rangle \mid b\in \C^\times\}  \\
\prim_{\{2,3\}} A &= \{ \langle x_1-a,x_2,x_3\rangle \mid a\in \C^\times\}.
\end{align*}
Hence, $\prim A$ may be pictured as follows:
\medskip
$$\xymatrixrowsep{3pc}\xymatrixcolsep{1.3pc}
\xymatrix{
(a\in\C) \dropup{\cdots\langle x_1{-}a,x_2,x_3\rangle\cdots} 
 &&(b\in\C) \dropup{\cdots\langle x_1,x_2{-}b,x_3\rangle\cdots}
  &&(c\in\C) \dropup{ \cdots\langle x_1,x_2,x_3{-}c\rangle\cdots}
  &&(b,c\in\C^\times) \dropup{\cdots\langle x_1,x_2{-}b,x_3{-}c\rangle\cdots} \\ 
 &\langle x_2\rangle \edge[ul]\edge[urrr] &&\langle x_3\rangle \edge[ulll]\edge[ul] &&\txt{\hphantom{$\langle x_3\rangle$}} \\ 
 &&&\langle0\rangle \edge[ull]\edge[u]\edge[uurrr]
}$$
\medskip

(iii) In setting up the semiclassical limit, we may choose $c$ as in (\ref{existc}) so that
$$c(\epsilon_1,\epsilon_2)=\lambda_1 \, , \qquad\qquad c(\epsilon_1,\epsilon_3)= \lambda_2 \, , \qquad\qquad c(\epsilon_2,\epsilon_3)= 1.$$
In view of (i), the group $\Lambda= \langle\im c\rangle$ is free abelian with a basis $\lambda_1$, $\lambda_2$. Since $\alpha$ is irrational, we may (and do) choose $\mu_1=1$ and $\mu_2=\alpha$. The polynomials $f_i$ of (\ref{calculation}) are then given by
$$f_1(z)=z, \qquad\qquad f_2(z)= 1+\alpha (z-1),$$
whence $\tq_{12}(z)= z^2$ and $\tq_{13}(z)= (1+\alpha (z-1))^2$, while $\tq_{23}(z)=1$. Consequently,
$$\tq'_{12}(1)= 2, \qquad\qquad \tq'_{13}(1)= 2\alpha, \qquad\qquad \tq'_{23}(1)= 0.$$ 

(iv) The semiclassical limit of $A$ in this example is thus $R= \C[x_1,x_2,x_3]$, equipped with the Poisson structure such that
$$\{x_1,x_2\}= 2x_1x_2, \qquad\quad \{x_1,x_3\}= 2\alpha x_1x_3, \qquad\quad \{x_2,x_3\}= 0.$$
This is a quadratic analog of the KKS Poisson structure on the dual of the standard example of a non-algebraic solvable Lie algebra (eg., cf.~\cite[Example 2.43]{Vanh}).

The Poisson primitive ideals of $R$ can be computed via the Poisson analog of the methods of \cite{GooLet2} -- see \cite[Theorems 4.2, 4.3]{Goo2}. These ideals are given by the same sets of generators as the primitive ideals of $A$ described in (ii); in particular, the picture of $\prim A$ obtained there serves also as a picture of $\pprim R$. From this picture, we read off that the symplectic cores in $\C^3$ for the Poisson structure under consideration are the following sets:
\begin{itemize}
\item The individual points on the $x_1$-axis;
\item The individual points in the $x_2x_3$-plane;
\item The $x_1x_2$-plane with the $x_1$- and $x_2$-axes removed;
\item The $x_1x_3$-plane with the $x_1$- and $x_3$-axes removed;
\item The space $\C^3$ with the three coordinate planes removed.
\end{itemize}

(v) Finally, we indicate how to find the symplectic leaves for this Poisson structure on $\C^3$. These are not all algebraic, just as in the case of the KKS Poisson structure on the dual of a non-algebraic solvable Lie algebra (cf.~\cite[Example 2.43 and discussion on p.~67]{Vanh} and \cite[Remark 1, p.~203]{BroGor}). We first recall from \cite[Proposition 3.6(1)]{BroGor} that each symplectic core is a union of symplectic leaves. This immediately implies that those individual points which are symplectic cores are also symplectic leaves. We determine the other leaves by considering Hamiltonian paths, as follows.

In full, the Poisson bracket on $R$ is given by the formula
$$\{f,g\}= 2x_1x_2 \biggl( \dfrac{\partial f}{\partial x_1} \dfrac{\partial g}{\partial x_2} -\dfrac{\partial f}{\partial x_2} \dfrac{\partial g}{\partial x_1} \biggr) +2\alpha x_1x_3 \biggl( \dfrac{\partial f}{\partial x_1} \dfrac{\partial g}{\partial x_3} -\dfrac{\partial f}{\partial x_3} \dfrac{\partial g}{\partial x_1} \biggr),$$
which also defines the unique extension of the bracket to the algebra $S$ of smooth complex functions on $\C^3$. For $f\in S$, the derivation $H_f= \{f,-\}$ on $S$ gives a smooth vector field on $\C^3$, and the flows (integral curves) of such vector fields $H_f$ are the \emph{Hamiltonian paths} for the given Poisson structure. (Specifically, a flow of $H_f$ is a path $\bfc(t)= (x_1(t),x_2(t),x_3(t))$ such that
$$\dot\bfc(t)= H_f \bfc(t)= \biggl( -2x_1x_2\dfrac{\partial f}{\partial x_2} -2\alpha x_1x_3 \dfrac{\partial f}{\partial x_3} ,\ 2x_1x_2 \dfrac{\partial f}{\partial x_1},\ 2\alpha x_1x_3 \dfrac{\partial f}{\partial x_1} \biggr),$$
where the dot stands for $d/dt$.) By definition \cite[p.~529]{Wei}, the symplectic leaves of $\C^3$ are the equivalence classes for the relation `connected by piecewise Hamiltonian paths'.

Paths of the form $\bfc(t)= (a,be^{at},ce^{\alpha at})$ with $a,b,c\in\C$ are flows of $H_{x_1/2}$, and paths $\bfc(t)= (ae^{bt},b,c)$ with $a,b,c\in\C$ are flows of $H_{-x_2/2}$. It follows that the third and fourth of the symplectic cores listed in (iv) are connected with respect to piecewise Hamiltonian paths, and hence these cores are also symplectic leaves. Moreover, each of the surfaces
$$\Sigma_d= \{ (a_1,a_2,a_3) \in (\C^\times)^3 \mid a_3= da_2^\alpha \},$$
for $d\in\C^\times$, is connected with respect to piecewise Hamiltonian paths. On the other hand, any Hamiltonian path within $(\C^\times)^3$ must satisfy $\dot x_3(t)/x_3(t)= \alpha\dot x_2(t)/x_2(t)$ and so must be contained within one of the $\Sigma_d$. Therefore the $\Sigma_d$ are the remaining symplectic leaves of $\C^3$. These form a one-parameter family of non-algebraic surfaces whose union is a single symplectic core.

We conclude that while $\prim A$ is homeomorphic to $\sympcore \C^3$ (Corollary \ref{maincor}), it is not homeomorphic to the space of symplectic leaves in $\C^3$. For instance, $\prim A$ has just one dense point, while the space of symplectic leaves in $\C^3$ (equipped with the quotient Zariski topology) has uncountably many.
\end{exam}

\section{Uniparameter Quantum Affine Spaces}

The Poisson structure on $k^n$ in Theorem \ref{mainthm} can be given more explicitly in the uniparameter case, namely when the scalars $\qij$ are powers of a single scalar which is not a root of unity. We treat this scalar as a square (as we may, since $k$ is algebraically closed), and write it in the form $q^2$ to match our existing notation.

\begin{note} \label{uniparameter} (i) Assume that the scalar
  $q\in\ktimes$ is not a root of unity. Let $\bfr= (\rij)$ be an
  additively antisymmetric $n\times n$ matrix over $\Z$, and take
  $\qij= q^{2\rij}$ for all $i$, $j$. Assume that $\bfr\ne \mathbf 0$,
  so that at least one $\qij\ne 1$.

  (ii) Under the present assumptions, the natural choice for the
  multiplicatively antisymmetric bicharacter $c$, as in
  (\ref{existc}), is to define it so that
$$c(\epsilon_i,\epsilon_j)= q^{\rij}$$
for all $i$, $j$. The group $\Lambda= \langle \im c\rangle$ is then
cyclic, of the form $\langle q^r\rangle$ where $r$ is the greatest
common divisor of the integers $\rij$.  Note that $r\ne 0$ because
$\bfr\ne \mathbf 0$.

(iii) Take $\lambda_1= q^r$ as basis element for $\Lambda$, and let $\ell_1$ be the corresponding alternating biadditive map on $\Gamma$ as in (\ref{lambda}), so that $c(s,t)= q^{r\ell_1(s,t)}$ for $s,t\in \Gamma$. Thus, $\ell_1(\epsilon_i,\epsilon_j)= \rij/r$ for $1\le i,j\le n$. 

(iv) Since $r\ne 0$, we may take $\mu_1=r$. It is most convenient to take a possibly non-\linebreak[0]quadratic choice for the polynomial $f_1$ in (\ref{calculation}), namely $f_1(z)= z^r$. It is easily seen that this satisfies the desired conditions:
$$f_1(1) = 1, \qquad\qquad  f_1(q) = \lambda_1,  \qquad\qquad f_1'(1) = \mu_1 \,.$$
Define $\tc$ as in (\ref{calculation}), and observe that
$$\tc(\epsilon_i,\epsilon_j)= z^{r\ell_1(\epsilon_i,\epsilon_j)}= z^{\rij}$$
 for $1\le i,j\le n$. 
 
 (v) Defining $\tqij$ as in (\ref{Rq}), we have $\tqij(z)= \tc(\epsilon_i,\epsilon_j)^2= z^{2\rij}$ for $1\le i,j\le n$. Consequently,
$$\tqij'(1)= 2\rij$$
for $1\le i,j\le n$.
\end{note}

In view of (\ref{uniparameter}), the uniparameter cases of Theorem \ref{mainthm} and Corollary \ref{maincor} can be stated as follows:

\begin{thm} \label{uniparamthm} Let $A= \calO_\bfq(k^n)$, assuming
  that $k$ is an algebraically closed field of characteristic zero and  $\bfq= (q^{2\rij})$,
where $q\in\ktimes$ is not a root of unity and $(\rij)$ is a nonzero additively antisymmetric $n\times n$ matrix
  over $\Z$.  Let $R= k[x_1,\dots,x_n]$, equipped with the Poisson
  structure such that
  $$\{x_i,x_j\}= 2\rij x_ix_j$$
  for $1\le i,j\le n$, and let
  $\Phi:A\rightarrow R$ be the $k$-linear isomorphism of
  {\rm(\ref{phi})}. Then the rule $P \mapsto \Phi^{-1}(P)$
  determines a homeomorphism
\[ \pspec R \longrightarrow \spec A, \]
that restricts to a homeomorphism
\[ \pprim R \longrightarrow \prim A. \]
Moreover, the rule $\calC(\mfrak) \mapsto \Phi^{-1}(\calP(\mfrak))$ determines a homeomorphism
\[ \sympcore k^n \longrightarrow \prim A. \]
\end{thm}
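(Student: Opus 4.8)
The plan is to obtain Theorem \ref{uniparamthm} as a direct specialization of Theorem \ref{mainthm} and Corollary \ref{maincor}, by verifying that the data recorded in (\ref{uniparameter}) is simply one admissible instance of the general construction of Section 2. First I would check the hypotheses of Theorem \ref{mainthm}: since $q$ is not a root of unity, the cyclic group $\langle q\rangle\subseteq\ktimes$ is infinite and hence torsionfree, and $\langle\qij\rangle=\langle q^{2\rij}\rangle$ is a subgroup of it, so it too is torsionfree; the standing assumption $\bfr\ne\mathbf 0$ enters only to guarantee that at least one $\qij\ne1$, exactly as in (\ref{uniparameter}i).

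Next I would run through the construction of Section 2 using the choices in (\ref{uniparameter}). With $c$ chosen so that $c(\epsilon_i,\epsilon_j)=q^{\rij}$ as in (\ref{existc}), the group $\Lambda=\langle\im c\rangle$ is infinite cyclic, equal to $\langle q^r\rangle$ for $r=\gcd_{i,j}\rij\ne0$; one takes $\lambda_1=q^r$ as a basis element of $\Lambda$ and the alternating biadditive map $\ell_1$ with $\ell_1(\epsilon_i,\epsilon_j)=\rij/r$, as in (\ref{lambda}). Here $m=1$ and $r\ne0$, so the singleton $\mu_1=r$ is (vacuously) $\Q$-linearly independent, and the non-quadratic choice $f_1(z)=z^r$ is permissible: it lies in $\Ktimes$ because $z$ is a unit of $K$ (neither $z-1$ nor $z-q$ divides $z$, equivalently $z$ does not vanish at $1$ or at $q$), and it satisfies the three required conditions $f_1(1)=1$, $f_1(q)=\lambda_1$, $f_1'(1)=\mu_1$. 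Defining $\tc$ accordingly yields $\tqij(z)=\tc(\epsilon_i,\epsilon_j)^2=z^{2\rij}$, hence $\tqij'(1)=2\rij$ for all $i,j$, and therefore the Poisson bracket on $R=k[x_1,\dots,x_n]$ produced by (\ref{semiclassical_limit}) is precisely $\{x_i,x_j\}=\tqij'(1)x_ix_j=2\rij x_ix_j$, the structure named in the statement. The map $\Phi:A\to R$ of (\ref{phi}) is unchanged by these choices, being defined solely through the common basis $\{x^s\mid s\in\Gplus\}$.

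Once $R$ with this bracket has been identified as the semiclassical limit arising from this instance of the construction, the homeomorphisms $\pspec R\to\spec A$ and $\pprim R\to\prim A$ are exactly the content of Theorem \ref{mainthm}, and the homeomorphism $\sympcore k^n\to\prim A$, $\calC(\mfrak)\mapsto\Phi^{-1}(\calP(\mfrak))$, is exactly the content of Corollary \ref{maincor}, each applied in this case. I do not expect a genuine obstacle: the only feature special to the uniparameter setting is the explicit evaluation $\tqij'(1)=2\rij$, and the only point calling for a moment's care is confirming that the non-quadratic choice $f_1(z)=z^r$ is legitimate --- the construction in (\ref{calculation}) is phrased with quadratic polynomials but explicitly permits any $f_i$ meeting the three stated conditions --- and that $z^r$ is indeed a unit of $K$. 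Everything else is the general machinery already established.
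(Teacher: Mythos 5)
Your proposal is correct and follows exactly the paper's route: the paper derives Theorem \ref{uniparamthm} as the specialization of Theorem \ref{mainthm} and Corollary \ref{maincor} recorded in (\ref{uniparameter}), where the choices $c(\epsilon_i,\epsilon_j)=q^{\rij}$, $\lambda_1=q^r$, $\mu_1=r$, and $f_1(z)=z^r$ yield $\tqij'(1)=2\rij$. Your added verifications (torsionfreeness of $\langle q^{2\rij}\rangle$ and the legitimacy of the non-quadratic unit $f_1=z^r$ in $K$) are exactly the points the paper leaves implicit.
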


\section{Quantum Affine Toric Varieties}

We extend our main results to quantizations of affine toric varieties
and, somewhat more generally, to certain cocycle twists of affine
commutative algebras. By a \emph{quantum affine toric variety} over
$k$ we mean, as in \cite{Ing}, an affine domain over $k$ equipped with
a rational action of an algebraic torus $H$ by $k$-algebra
automorphisms, such that the $H$-eigenspaces are
$1$-dimensional. Since a rational action of $H$ is equivalent to a
grading by the character group of $H$ (e.g., \cite[Lemma
II.2.11]{BroGoo}), the quantum affine toric varieties over $k$ are
also the affine domains over $k$, graded by free abelian groups of
finite rank, with $1$-dimensional homogeneous components. This second
description is convenient for present purposes, as it allows us to
define quantizations via cocycle twists. As in \cite[Section
6]{GooLet1} and \cite[Section 4]{Goo1}, neither $1$-dimensionality of
homogeneous components nor absence of zero-divisors is needed in our
proofs, and so we can work with a more general class of twists of
graded algebras.

\begin{note} \label{twist} (i) Let $R$ be a commutative affine
  $k$-algebra graded by an abelian group $G$, and let $c: G\times
  G\rightarrow \ktimes$ be a $2$-cocycle. The \emph{twist of $R$ by
    $c$} \cite[Section 3]{ArtSchTat} is a $G$-graded $k$-algebra $R'$,
  with a $G$-graded vector space isomorphism $r\mapsto r'$ from
  $R\rightarrow R'$ (the \emph{twist map}), and multiplication given
  by $r's'= c(\alpha,\beta)(rs)'$ for $\alpha,\beta\in G$ and $r\in
  R_\alpha$, $s\in R_\beta$.

In \cite[Theorem 6.3]{GooLet1}, topological quotient maps $\spec R\rightarrow \spec R'$ and $\max R\rightarrow \prim R'$ are constructed, under the assumptions that $G$ is torsionfree and $-1\notin \langle\im c\rangle$ (or $\chr k =2$). As discussed in \cite[4.3]{Goo1}, the proof of \cite[Theorem 6.3]{GooLet1} provides an alternating bicharacter $d$ on $G$ such that $R'$ is isomorphic to the twist of $R$ by $d$, and after replacing $c$ by $d$, the torsionfreeness hypothesis on $G$ is no longer needed.

Thus, we now assume that $c$ is an alternating bicharacter on $G$. Moreover, we assume that the subgroup $\langle\im c\rangle \subseteq \ktimes$ is torsionfree. Set $A=R'$, and let $\Phi: A\rightarrow R$ be the inverse of the twist map.

(ii) Since $R$ is affine, we can choose a finite set of homogeneous $k$-algebra generators for $R$, say $r_1,\dots,r_n$. Set $\delta_i= \deg r_i$ for $1\le i\le n$. Define $\Gamma$ and $\Gplus$ as in (\ref{gamma}), let $\rho: \Gamma\rightarrow G$ be the group homomorphism such that $\rho(\epsilon_i)= \delta_i$ for $1\le i\le n$, and set $\chat= c\circ(\rho\times\rho)$, which is an alternating bicharacter on $\Gamma$. Also, set $\qhatij= \chat(\epsilon_i,\epsilon_j)= c(\delta_i,\delta_j)$ for $1\le i,j\le n$ and $\bfqhat= (\qhatij)$.

Now set $\Ahat= \calO_{\bfqhat}(k^n)$ and identify $\Ahat$ with
$k_{\chat}\Gplus$, with $k$-basis $\{x^s \mid s\in \Gplus\}$. The
corresponding semiclassical limit, as in (\ref{semiclassical_limit}),
is the Poisson algebra $\Rhat= k_{\uhat}\Gplus$, for a suitable
alternating biadditive map $\uhat= \varphi\chat: \Gamma\times
\Gamma\rightarrow k$, where $\varphi$ is an injective group
homomorphism from $\langle\qhatij\rangle= \langle\im \chat\rangle$ to
$k^+$, as in (\ref{pspecw}iii). We also write $\Rhat$ with $k$-basis
$\{x^s \mid s\in \Gplus\}$. Hence, there is a $k$-linear isomorphism
$\Phihat: \Ahat\rightarrow \Rhat$ such that $\Phihat(x^s)= x^s$ for
all $s\in\Gplus$, as in (\ref{phi}).

(iii) Let $\pi_A: \Ahat\rightarrow A$ and $\pi_R: \Rhat\rightarrow R$
be the natural $k$-algebra quotient maps, such that $\pi_A(x_i)= r'_i$
and $\pi_R(x_i)= r_i$ for $1\le i\le n$. Then we obtain a
diagram of $k$-linear maps as follows:
$$\xymatrixrowsep{3pc}\xymatrixcolsep{5pc}
\xymatrix{
{\Rhat} \ar@{->>}[d]_{\pi_R} &{\Ahat} \ar[l]_{\Phihat}
\ar@{->>}[d]^{\pi_A}\\ 
R &A \ar[l]_{\Phi}
}$$
This diagram commutes because
\begin{align*} \Phi\pi_A(x^s) &= \biggl( \prod_{1\le i<j\le n} \tc(s_i\epsilon_i, s_j\epsilon_j) \biggr)^{-1} \Phi\pi_A(x_1^{s_1} \cdots x_n^{s_n}) \\
 &= \biggl( \prod_{1\le i<j\le n} c(s_i\delta_i, s_j\delta_j) \biggr)^{-1} \Phi\bigl( (r_1^{s_1})' \cdots (r_n^{s_n})' \bigr) \\
  &= \Phi\bigl( (r_1^{s_1}\cdots r_n^{s_n})' \bigr)= r_1^{s_1}\cdots r_n^{s_n} = \pi_R\Phihat(x^s)
  \end{align*}
for $s\in \Gplus$.

The $\Gamma$-grading on $\Rhat$ induces a $G$-grading via the homomorphism $\rho$, which we write in the form $\Rhat= \bigoplus_{\alpha\in G} \Rhat[\alpha]$, where
$$\Rhat[\alpha]= \bigoplus_{s\in \rho^{-1}(\alpha) \cap \Gplus} kx^s$$
for $\alpha\in G$. With respect to this $G$-grading, $\pi_R$ is $G$-homogeneous, in the sense that $\pi_R(\Rhat[\alpha]) \subseteq R_\alpha$ for all $\alpha\in G$. Hence, $\ker\pi_R$ is a $G$-homogeneous ideal of $\Rhat$.

(iv) We next show that $\ker\pi_R$ is a Poisson ideal of $\Rhat$. To see this, let $\alpha,\beta\in G$ and note that whenever $s\in \rho^{-1}(\alpha) \cap \Gplus$ and $t\in \rho^{-1}(\beta) \cap \Gplus$, we have
$$\{x^s,x^t\}= \varphi\chat(s,t)x^sx^t= \varphi c(\alpha,\beta)x^sx^t.$$
It follows that $\{a,b\}= \varphi c(\alpha,\beta)ab$ for all $a\in \Rhat[\alpha]$ and $b\in \Rhat[\beta]$. Consequently, any $G$-homogeneous ideal of $\Rhat$, and in particular $\ker\pi_R$, is a Poisson ideal.

Now $R$ becomes a Poisson algebra quotient of $\Rhat$, such that 
$$\{a,b\}= \varphi c(\alpha,\beta)ab$$
whenever $\alpha,\beta\in G$ and $a\in R_\alpha$, $b\in R_\beta$. (In particular, $\{r_i,r_j\}= \varphi c(\delta_i,\delta_j)r_ir_j$ for $1\le i,j\le n$.) We view $R$, equipped with this Poisson structure, as a semiclassical limit of $A$.
\end{note}

\begin{thm} \label{qtorichomeos}
Let $k$ be an algebraically closed field of characteriztic zero and $R$ a commutative affine $k$-algebra, graded by an abelian group $G$. Let $c:G\times G\rightarrow \ktimes$ be an alternating bicharacter such that the group $\langle\im c\rangle \subseteq \ktimes$ is torsionfree, let $A$ be the twist of $R$ by $c$, and let $\Phi: A\rightarrow R$ be the inverse of the twist map. Equip $R$ with the Poisson structure described in {\rm(\ref{twist}iv)}. Then the rule $P \mapsto \Phi^{-1}(P)$ determines a homeomorphism
\[ \pspec R \longrightarrow \spec A, \]
which restricts to a homeomorphism
\[ \pprim R \longrightarrow \prim A. \]
Moreover, the rule $\calC(\mfrak) \mapsto \Phi^{-1}(\calP(\mfrak))$ determines a homeomorphism
\[ \sympcore \Max R \longrightarrow \prim A. \]
\end{thm}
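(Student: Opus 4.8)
The plan is to derive the theorem from Theorem~\ref{mainthm} and Corollary~\ref{maincor}, applied to the genuine quantum affine space $\Ahat= \calO_{\bfqhat}(k^n)$ and its semiclassical limit $\Rhat$ constructed in (\ref{twist}), by transporting those homeomorphisms along the vertical quotient maps $\pi_A: \Ahat\rightarrow A$ and $\pi_R: \Rhat\rightarrow R$ of the commutative square in (\ref{twist}iii). Since $\langle\im\chat\rangle$ is a subgroup of the torsionfree group $\langle\im c\rangle$, Theorem~\ref{mainthm} and Corollary~\ref{maincor} do apply to $\Ahat$ and $\Rhat$, with the linear isomorphism $\Phihat:\Ahat\rightarrow\Rhat$ of (\ref{twist}ii). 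Two algebraic facts drive the reduction. First, because $\Phi\pi_A= \pi_R\Phihat$ with $\Phi$ and $\Phihat$ bijective, one has $\ker\pi_A= \Phihat^{-1}(\ker\pi_R)$ as $k$-subspaces and $\pi_A\Phihat^{-1}= \Phi^{-1}\pi_R$. Second, $\ker\pi_R$ is a Poisson ideal of $\Rhat$ (this is (\ref{twist}iv)). From the second fact and the standard correspondences for quotient rings, $P\mapsto\pi_R^{-1}(P)$ is a Zariski homeomorphism of $\pspec R$ onto the set of Poisson prime ideals of $\Rhat$ containing $\ker\pi_R$ (a prime of $\Rhat/\ker\pi_R$ is Poisson exactly when its preimage is, since $\ker\pi_R$ is a Poisson ideal), and likewise $J\mapsto\pi_A^{-1}(J)$ identifies $\spec A$ and $\prim A$ homeomorphically with the sets of prime, resp.\ primitive, ideals of $\Ahat$ containing $\ker\pi_A$.

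Granting these identifications, I would obtain the homeomorphism $\pspec R\rightarrow\spec A$ by composing three: $\pspec R\cong\{P'\in\pspec\Rhat\mid P'\supseteq\ker\pi_R\}$, then the restriction of $\Phihat^{-1}$, then $\{J'\in\spec\Ahat\mid J'\supseteq\ker\pi_A\}\cong\spec A$. The middle step works because $\Phihat^{-1}$, being a bijection of the underlying sets that carries $\ker\pi_R$ onto $\ker\pi_A$, preserves and reflects inclusions of subsets, so Theorem~\ref{mainthm} restricts to a homeomorphism between the two displayed subspaces; and the composite equals $P\mapsto\Phi^{-1}(P)$ because $\pi_A\Phihat^{-1}\pi_R^{-1}= \Phi^{-1}$ on subsets, by the first fact.

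For the restriction to Poisson primitive ideals, the key point is that $\pi_R^{-1}$ carries $\pprim R$ onto the set of Poisson primitive ideals of $\Rhat$ containing $\ker\pi_R$. Indeed, if $\mathfrak{n}$ is a maximal ideal of $\Rhat$ with $\mathfrak{n}\supseteq\ker\pi_R$, then $\ker\pi_R$ is a Poisson ideal contained in $\mathfrak{n}$, so $\ker\pi_R\subseteq\calP(\mathfrak{n})$ and $\calP(\mathfrak{n})/\ker\pi_R$ is the Poisson core of $\mathfrak{n}/\ker\pi_R$ in $R$; conversely, if $\calP(\mathfrak{n})\supseteq\ker\pi_R$ for some maximal ideal $\mathfrak{n}$ of $\Rhat$, then already $\mathfrak{n}\supseteq\calP(\mathfrak{n})\supseteq\ker\pi_R$, so $\calP(\mathfrak{n})$ descends to a Poisson primitive ideal of $R$. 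Combined with the standard analogue for primitive ideals of $\Ahat$ and $A$, this restricts the homeomorphism of the previous paragraph to $\pprim R\rightarrow\prim A$. The same bookkeeping on maximal ideals handles the symplectic-core statement: the identity $\calP(\mathfrak{n})=\calP(\mathfrak{n}')$ together with $\calP(\mathfrak{n})\supseteq\ker\pi_R$ forces $\mathfrak{n}'\supseteq\ker\pi_R$, so the closed set $V(\ker\pi_R)=\Max R$ is saturated for the symplectic-core partition of $\Max\Rhat=k^n$, and under $\mathfrak{n}\leftrightarrow\mathfrak{n}/\ker\pi_R$ the symplectic cores of $\Max R$ are exactly the symplectic cores of $k^n$ contained in $V(\ker\pi_R)$. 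Since the restriction of a quotient map to a saturated closed subset is a quotient map, $\sympcore\Max R$ is homeomorphic to its image in $\sympcore k^n$; composing the homeomorphism $\sympcore k^n\rightarrow\prim\Ahat$ of Corollary~\ref{maincor} (for $\Ahat$) with the identification $\prim\Ahat\cap\{J'\supseteq\ker\pi_A\}\cong\prim A$, and chasing the maps, yields the asserted homeomorphism $\sympcore\Max R\rightarrow\prim A$, $\calC(\mfrak)\mapsto\Phi^{-1}(\calP(\mfrak))$.

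The only non-formal inputs are Theorem~\ref{mainthm} and Corollary~\ref{maincor}; the rest is bookkeeping. The step needing the most care is the transfer of Poisson cores through $\pi_R$ --- in particular the observation that a maximal ideal of $\Rhat$ whose Poisson core contains $\ker\pi_R$ must itself contain $\ker\pi_R$ --- and the ensuing verification that $\Max R$ is saturated for the symplectic-core partition of $k^n$, which is precisely what lets the quotient topology on $\sympcore\Max R$ be recovered as a subspace topology from $\sympcore k^n$.
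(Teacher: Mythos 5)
Your proposal is correct and follows essentially the same route as the paper: apply Theorem \ref{mainthm} (and Corollary \ref{maincor}) to $\Ahat$ and $\Rhat$, note that $\Phihat^{-1}$ carries $\ker\pi_R$ to $\ker\pi_A$ so the homeomorphism restricts to the closed subsets $V$ and $W$ of primes containing these kernels, and transport everything down through $\pi_R^*$ and $\pi_A^*$, with the symplectic-core statement handled by observing that $\Max R$ is a saturated closed subset for the Poisson-core quotient map on $\Max\Rhat$. The paper's proof is the same diagram chase, phrased via the commutative squares in its displayed diagrams and the quotient-topology statement from \cite[Theorem 4.1(b)]{Goo2}.
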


\begin{proof} By Theorem \ref{mainthm}, the rule $P\mapsto \Phihat^{-1}(P)$
determines homeomorphisms
\[ \pspec \Rhat \longrightarrow \spec \Ahat \qquad\qquad \text{and} \qquad\qquad  \pprim \Rhat \longrightarrow \prim \Ahat. \]
Observe that the first homeomorphism restricts to a homeomorphism $\eta: V\rightarrow W$ where
$$V= \{P\in \pspec\Rhat \mid P\supseteq \ker\pi_R\} \qquad \text{and} \qquad W= \{P\in \spec\Ahat \mid P\supseteq \ker\pi_A\}.$$
The quotient maps $\pi_R$ and $\pi_A$ induce homeomorphisms $\pi_R^*: \pspec R \rightarrow V$ and $\pi_A^*: \spec A \rightarrow W$, which fit into the following commutative diagram:
 $$\xymatrixrowsep{3pc}\xymatrixcolsep{6pc}
\xymatrix{
 {\pspec\Rhat} \ar[r]^{P\longmapsto \Phihat^{-1}(P)} &{\spec\Ahat}\\
 V \ar[u]^{\subseteq} \ar[r]^{\eta} &W \ar[u]_{\subseteq}\\
 {\pspec R} \ar[u]^{\pi_R^*} \ar[r]^{P\longmapsto \Phi^{-1}(P)} &{\spec A} \ar[u]_{\pi_A^*}
 }$$
Thus, we have the desired homeomorphism $\pspec R \longrightarrow \spec A$.
 
 That $P\longmapsto \Phi^{-1}(P)$ also determines a homeomorphism $\pprim R \longrightarrow \prim A$ follows in the same manner, once one observes that $\pi_R^*$ and $\pi_A^*$ map $\pprim R$ and $\prim A$ homeomorphically onto $V\cap \pprim\Rhat$ and $W\cap \prim\Ahat$, respectively.
 
 The final homeomorphism will follow from the results above in the same manner as Corollary \ref{maincor} once we show that the Zariski topology on $\pprim R$ is the quotient topology induced by the Poisson core map $\calP(-): \Max R\rightarrow \pprim R$. Set
$$X= \{\mfrak\in \Max \Rhat\mid \mfrak\supseteq \ker\pi_R\},$$
and observe that we have a commutative diagram
$$\xymatrixrowsep{3pc}\xymatrixcolsep{4pc}
\xymatrix{
{\Max\Rhat} \ar[r]^-{\calP(-)} &{\pprim\Rhat}\\
X \ar[u]^{\subseteq} \ar[r]^-{\theta} &{V\cap \pprim\Rhat} \ar[u]_{\subseteq}\\
{\Max R} \ar[u]^{\pi_R^*} \ar[r]^-{\calP(-)} &{\pprim R} \ar[u]_{\pi_R^*}
}$$
with surjective horizontal maps.  As in (\ref{sympcore}iii), it
follows from \cite[Theorem 4.1(b)]{Goo2} that the topology on $\pprim
\Rhat$ is the quotient topology from the top map in the diagram. It
follows that $\theta$ is a topological quotient map, and therefore so
is the bottom map, as desired.
 \end{proof}
 
 \begin{note} The uniparameter case of Theorem \ref{qtorichomeos} is the case in which $c= q^d$ where $q\in\ktimes$ is not a root of unity and $d:G\times G \rightarrow k$ is an antisymmetric biadditive map. We can then take $\varphi: \langle\im c\rangle \rightarrow k$ to be the $q$-logarithm, so that $\varphi c(\alpha,\beta)= d(\alpha,\beta)$ for $\alpha,\beta\in G$. The Poisson structure on $R$ is then given by
$$\{a,b\}= d(\alpha,\beta)ab$$
for $\alpha,\beta\in G$ and $a\in R_\alpha$, $b\in R_\beta$.
\end{note}


\end{document}